\documentclass[11pt, reqno]{amsart}
\usepackage{mathtools, amssymb}
\usepackage[utf8]{inputenc}
\usepackage{tikz}
\usepackage{pgfplots}
\pgfplotsset{compat=1.18}
\usepackage{mathrsfs}
\usepgfplotslibrary{colormaps}
\usepgfplotslibrary{fillbetween}
\usepackage{mathptmx}
\usepackage{xcolor}
\usepackage[colorlinks = true,
            linkcolor = black,
            urlcolor  = blue,
            citecolor = blue,
            anchorcolor = blue]{hyperref}
\newtheorem{theorem}{Theorem}[section]

\newtheorem{proposition}[theorem]{Proposition}
\newtheorem{corollary}[theorem]{Corollary}

\theoremstyle{definition}

\newtheorem{definition}[theorem]{Definition}
\newtheorem{remark}[theorem]{Remark}
\newtheorem{example}[theorem]{Example}

\newtheorem*{theorem*}{Theorem}
\newtheorem*{question*}{Open question}

\newcommand{\nperp}{\perp_{\delta}^\varepsilon}
\newcommand{\zperp}{\perp_B^{\varepsilon}}

\title[Norming Approximate Orthogonality]
{Norming Approximate Orthogonality in Normed Linear Spaces}

\author{Saikat Roy}

\address[Roy]{Department of Mathematics, School of Advanced Sciences, VIT-AP University, Beside AP Secretariat, Amaravati, 522241, Andhra Pradesh, India.}
\email{saikatroy.cu@gmail.com, saikat.roy@vitap.ac.in}

\subjclass{Primary: 46B20; Secondary: 46B28, 46C05, 47L05.}
	\keywords{Approximate orthogonality; Norming approximate orthogonality; Norm attainment; Continuous function spaces; Symmetry of norming orthogonality}

\begin{document}

\begin{abstract}
We introduce and study the notion of \emph{norming approximate orthogonality}, a two-parameter generalization of Birkhoff--James orthogonality in normed linear spaces. For 
$\delta, \varepsilon \in [0,1)$ with $\varepsilon < (1-\delta)^2$, we say 
$x \nperp y$ in $X$ if there exists $f \in X^*$ with 
$|f(x)| \geq (1-\delta)\|f\|\|x\|$ and $|f(y)| \leq \frac{\varepsilon}{1-\delta}\|f\|\|y\|$, 
simultaneously relaxing both the norming condition on $x$ and the vanishing 
condition on $y$. It is proved that
\[
x\nperp y \iff \|x+\lambda y\|\geq (1-\delta)\|x\|-\frac{\varepsilon}{1-\delta}\|\lambda y\|~\qquad \forall ~\text{scalars}~\lambda.
\]
This framework interpolates between two notions of approximate orthogonality in normed linear spaces due to Chmieli\'nski and Dragomir, and recovers three existing notions of orthogonality in extreme cases: exact Birkhoff--James orthogonality at $\delta = \varepsilon = 0$, the approximate 
orthogonality of Chmieli\'nski at $\delta = 0$, and the approximate orthogonality 
of Dragomir at $\varepsilon = 0$. In Hilbert spaces such an orthogonality is characterized by an inner product bound $L(\delta,\varepsilon)<1$, namely
\[
x\nperp y \iff |\langle x, y \rangle| \leq L(\delta,\varepsilon)\|x\|\|y\|,
\qquad L(\delta,\varepsilon)=\cos(\theta_\varepsilon-\theta_\delta),
\]
where $\cos\theta_\delta = 1-\delta$ and
$\cos\theta_\varepsilon = \frac{\varepsilon}{1-\delta}$.
A two-parameter proximity result generalizing Chmieli\'nski's characterization of $\perp_B^\varepsilon$ is established. A dual formulation of norming approximate orthogonality is established with a complete equivalence in the reflexive case. We apply our results to (vector-valued) continuous function spaces, which extends some earlier results and recovers  a few operator-theoretic results with alternative proofs using measure theoretic techniques.
\end{abstract}

\maketitle

{\small \tableofcontents}

\bigskip

\section{Introduction}

\noindent All normed linear spaces are over $\mathbb{C}$; results remain valid over 
$\mathbb{R}$ with straightforward modifications. We write $X^*$ for the (topological) dual of a normed linear space $X$, and $B_X$, $S_X$ for the closed unit ball and unit sphere of $X$, respectively. For any $\epsilon > 0$, $B(0, \epsilon)$ denotes the closed disc of radius $\epsilon$ centred at $0$ (origin) in the complex plane. For any non-zero $x$ in a normed linear space $X$, we denote the collection of all support functionals at $x$, by $J(x)=\{f\in S_{X^*}:~f(x)=\|x\|\}$, which is non-empty by the Hahn-Banach Theorem.

\subsection{Motivation and Background}

\noindent Orthogonality in a normed linear space is captured by the Birkhoff--James 
relation~\cite{Bir}: $x \perp_B y$ if $\|x + \lambda y\| \geq \|x\|$ 
for all scalars $\lambda$, or equivalently, if there exists $f \in X^*$ with 
$\|f\| = 1$, $f(x) = \|x\|$ and $f(y) = 0$. Birkhoff-James orthogonality has a deep connection with the norm attainment set of functionals \cite{Jam1, Jam2, roy}, namely $x\perp_B y$ if and only if there exists a continuous linear functional that attains 
its norm at $x$ and vanishes at $y$. Two independent lines of 
relaxation have been pursued in the literature. Dragomir~\cite{Drag} introduced 
the first approximate version, defining $x \perp^{\varepsilon}_{D} y$ for 
$\varepsilon \in [0,1)$ by the following norm inequality
\[
\|x + \lambda y\| \geq (1-\varepsilon)\|x\| \quad \text{for all scalars } \lambda,
\]
Another notion of approximate orthogonality is introduced by Chmieli\'nski~\cite{Chm1} defining $x \perp_B^\varepsilon y$ by
\[
\|x + \lambda y\|^2 \geq \|x\|^2 - 2\varepsilon\,\|x\|\,\|\lambda y\| 
\quad \text{for all scalars } \lambda,
\]
and this relation has since been studied 
extensively~\cite{Chm2, Chm3, Chm4, ChmGrWoj, ChmStWoj, ChmWoj, woj, Woj1, Pal, Alt}. 
While $x \perp_B^\varepsilon y$ possesses an equivalent form in terms of continuous linear functionals attaining norm at $x$ and approximately vanishes at $y$ \cite{ChmStWoj, woj} 
\[
x \perp_B^\varepsilon y \iff \exists\, f \in X^* : \|f\| = 1,\ 
f(x) = \|x\|,\ |f(y)| \leq \varepsilon\|y\|,
\]
a functional description of $x \perp^{\varepsilon}_{D} y$ was obtained in
\cite[Lemma 3.2]{MSP}. However, a common parametric framework interpolating between the two relaxations and reducing to Birkhoff--James orthogonality in the limiting case $\delta=\varepsilon=0$ remained absent. The present paper addresses both simultaneously, asking what orthogonality is induced when one requires a functional that 
\emph{approximately} attains its norm at $x$ and \emph{approximately} 
vanishes at $y$.

\begin{definition}\label{1st Defn}
Let $X$ be a normed linear space and $x, y \in X$. Let $\delta, \varepsilon \in [0,1)$ 
satisfy $\varepsilon < (1-\delta)^2$. Then $x$ is said to be 
\emph{$(\delta,\varepsilon)$-norming approximately orthogonal} to $y$, written 
$x \nperp y$, if
\[
\exists\, f \in X^* \setminus\{0\} : \quad |f(x)| \geq (1-\delta)\|f\|\|x\|, 
\quad |f(y)| \leq \frac{\varepsilon}{1-\delta}\|f\|\|y\|.
\]
 Since both conditions are homogeneous of degree one in $f$, replacing $f$ by 
$f/\|f\|$ shows that one may equivalently take $f\in S_{X^*}$, and we shall do so 
whenever convenient.
\end{definition}

\noindent It is convenient to write
\[
\cos\theta_\delta = 1-\delta, \qquad \cos\theta_\varepsilon = \frac{\varepsilon}{1-\delta},
\qquad \theta_\delta\in[0,\tfrac{\pi}{2}),\quad \theta_\varepsilon\in(0,\tfrac{\pi}{2}],
\]
so that the condition $\varepsilon<(1-\delta)^2$ is equivalent to $\theta_\varepsilon>\theta_\delta$.

\begin{remark}
The relation $\nperp$ recovers all three classical notions: setting 
$\delta = \varepsilon = 0$ gives exact Birkhoff--James orthogonality; 
$\delta = 0$ gives Chmieli\'nski's $\perp_B^\varepsilon$~\cite{Chm1, ChmStWoj}; 
and $\varepsilon = 0$ gives what we call \emph{$\delta$-norming orthogonality} 
$\perp^0_\delta$, the functional analytic counterpart of Dragomir's approximate 
orthogonality~\cite{Drag}, as made precise by condition~(iv) of 
Theorem~\ref{Thm:Equiv} below. 
\end{remark}

\noindent The condition $\varepsilon < (1-\delta)^2$ ensures non-degeneracy: 
$x \nperp x$ if and only if $x = 0$. The relation is homogeneous and, when 
$x \neq 0$  and $y \neq 0$, forces linear independence of $\{x, y\}$ 
 (this is verified in the course of the proof of 
Theorem~\ref{Thm:Equiv}). It is also monotone: 
$x \nperp y$ implies $x \perp_{\delta'}^{\varepsilon'} y$ for all
$\delta' \geq \delta$, $\varepsilon' \geq \varepsilon$ satisfying
$\varepsilon' < (1-\delta')^2$.

\subsection{Statement of Main Results}

\noindent Our first main result gives a complete characterization of $\nperp$ 
through several equivalent conditions. In the spirit of~\cite{ChmStWoj}, 
Theorem~\ref{Thm:Equiv} extends the functional characterization of 
$\perp_B^\varepsilon$ to the two-parameter setting and to complex spaces.

\begin{theorem}\label{Thm:Equiv}
Let $X$ be a normed linear space, $x, y \in X$ non-zero, and $\delta, \varepsilon 
\in [0,1)$ with $\varepsilon < (1-\delta)^2$. The following conditions are equivalent:
\begin{itemize}
    \item[(i)] $x \nperp y$.
    \item[(ii)] $\exists\, f \in X^*$ with $f(x)$ real and $f(x) \geq (1-\delta)\|f\|\|x\|$ 
    and $|f(y)| \leq \frac{\varepsilon}{1-\delta}\|f\|\|y\|$.
    \item[(iii)] $\exists\, f \in X^*$ with $\mathrm{Re}\,f(x) \geq (1-\delta)\|f\|\|x\|$ 
    and $|f(y)| \leq \frac{\varepsilon}{1-\delta}\|f\|\|y\|$.
    \item[(iv)] $\|x + \lambda y\| \geq (1-\delta)\|x\| - 
    \frac{\varepsilon}{1-\delta}|\lambda|\|y\|$ for all scalars $\lambda$.
\end{itemize}
Moreover, if $X$ is a Hilbert space, then \emph{(i)--(iv)} are further equivalent to:
\begin{itemize}
    \item[(v)] $|\langle x, y\rangle| \leq L(\delta,\varepsilon)\|x\|\|y\|$, where
$L(\delta,\varepsilon)=\cos(\theta_\varepsilon-\theta_\delta)$ is the constant of Proposition~\ref{prop:L}.
   \item[(vi)] $\exists\, u \in S_X$ with $\langle x, u\rangle \geq (1-\delta)\|x\|$ 
    and $|\langle y, u\rangle| \leq \frac{\varepsilon}{1-\delta}\|y\|$.
\end{itemize}
\end{theorem}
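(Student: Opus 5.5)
Write $a=1-\delta$ and $b=\varepsilon/(1-\delta)$, so $a=\cos\theta_\delta$, $b=\cos\theta_\varepsilon$ and $a>b$. The plan is to prove the cycle (i)$\Rightarrow$(ii)$\Rightarrow$(iii)$\Rightarrow$(iv)$\Rightarrow$(i), and then treat the Hilbert space conditions separately.

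\textbf{(i)$\Rightarrow$(ii).} Replace $f$ by $e^{-i\arg f(x)}f$. This rotation keeps $\|f\|$ and $|f(y)|$ unchanged and makes $f(x)=|f(x)|$ real.

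\textbf{(ii)$\Rightarrow$(iii).} This is immediate.

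\textbf{(iii)$\Rightarrow$(iv).} Normalize $\|f\|=1$. For any scalar $\lambda$,
\[
\|x+\lambda y\|\ge \mathrm{Re}\,f(x+\lambda y)\ge \mathrm{Re}\,f(x)-|\lambda||f(y)|\ge a\|x\|-b|\lambda|\|y\|.
\]

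\textbf{(iv)$\Rightarrow$(i).} This is the main step, and it needs a Hahn--Banach separation argument. Let $Y=\mathrm{span}\{y\}$ and work with the seminorm-type functional $p(z)=\|z\|$. I want $f\in S_{X^*}$ with $\mathrm{Re} f(x)\ge a\|x\|$ and $|f(y)|\le b\|y\|$.

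First consider the convex sets $C=\{z:\|z\|\le 1\}+\{\lambda y: |\lambda|\le \cdot\}$; more cleanly, define the sublinear functional
\[
q(z)=\inf_{\lambda}\big(\|z-\lambda y\|+b|\lambda|\|y\|\big).
\]
This is an infimal convolution of $\|\cdot\|$ with $\lambda y\mapsto b|\lambda|\|y\|$ on $Y$, so $q$ is sublinear and $q\le\|\cdot\|$. Condition (iv), applied with $\lambda\to-\lambda$, says exactly that $q(x)\ge a\|x\|$.

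Next, Hahn--Banach (the complex version via real parts) gives a linear $f$ with $f(x)=q(x)$ and $\mathrm{Re} f\le q$. Then $\|f\|\le 1$, since $q\le\|\cdot\|$. Also $\mathrm{Re} f(\mu y)\le q(\mu y)\le b|\mu|\|y\|$ for every $\mu$, so $|f(y)|\le b\|y\|$. Finally $f(x)=q(x)\ge a\|x\|>0$, so $f\neq0$. Since $\|f\|\le1$, we get $|f(x)|\ge a\|f\|\|x\|$ and $|f(y)|\le b\|y\|\le$ wait, dividing by $\|f\|\le1$ goes in the wrong direction for the $y$ condition.

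To fix this I will take a different route: replace the normalization by requiring $\|f\|=1$ exactly. The key observation is that (iv) also forces $\|f\|\ge$ something. Instead, separate the open convex set $U=\{z:\|z\|<a\|x\|\}+\{\lambda y:\ \}$. Concretely, consider the function $g(\lambda)$ defined on $\mathrm{span}\{x,y\}$ by $g(x+\lambda y)$.

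I expect this normalization to be the genuine obstacle. My first attempt will be to rescale $f$ by $1/\|f\|$ and use the chain $|f(y)|\le b\|y\|$ together with $\|f\|\ge f(x)/\|x\|\ge a$. That yields only $|f(y)|/\|f\|\le (b/a)\|y\|$, so I would need a sharper choice of $q$. If that fails, I will instead separate the convex set $\{\|z\|\le 1\}$ from the cone generated by $x$ plus the "$b$-wedge" around $y$, following the method of \cite{ChmStWoj}, and I will check the two-dimensional real subspace $\mathrm{span}\{x,y\}$ first.

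\textbf{Hilbert space case.} By the Riesz representation, (vi) is (ii) with $f=\langle\cdot,u\rangle$, so (vi)$\Leftrightarrow$(ii). For (v)$\Leftrightarrow$(vi), note that $u$ must lie, up to an irrelevant orthogonal component, in $\mathrm{span}\{x,y\}$. Work in angles: put $\cos\alpha=|\langle x,y\rangle|/(\|x\|\|y\|)$ and choose $u$ in the plane with angle at most $\theta_\delta$ to $x$ and at least $\theta_\varepsilon$ to the line of $y$. The triangle inequality for angles on the sphere shows that such a $u$ exists iff $\alpha\ge\theta_\varepsilon-\theta_\delta$, which is $|\langle x,y\rangle|\le\cos(\theta_\varepsilon-\theta_\delta)\|x\|\|y\|$. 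The complex case reduces to this by rotating $y$ so that $\langle x,y\rangle\ge0$.
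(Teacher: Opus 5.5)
Your cycle is correct except at (iv)$\Rightarrow$(i), which you leave unfinished. That is the only implication with real content. Your seminorm $q$ and the Hahn--Banach extension are fine; this is also the paper's first step. They give $f$ with $\|f\|\le 1$, $f(x)=q(x)\ge a\|x\|$ and $|f(y)|\le b\|y\|$. But, as you observe, when $\|f\|<1$ this does not give $|f(y)|\le b\|f\|\|y\|$, and rescaling only yields the constant $b/a$. Neither fallback (a ``sharper $q$'', or an unspecified separation argument) is carried out, so (iv)$\Rightarrow$(i) is not proved.

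This also leaves (iv) unconnected in the Hilbert-space part. Your angle argument relates (v) only to (vi), hence to (i)--(iii), so (iv)$\Rightarrow$(v) still depends on the missing step. The paper instead proves (iv)$\Leftrightarrow$(v) directly by minimizing the quadratic $Q(t)$.

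The missing idea is to enlarge $\|f\|$ without changing $f(y)$ and without decreasing $\mathrm{Re}\,f(x)$. First, (iv) forces $x,y$ to be linearly independent: if $x=cy$, then $\lambda=-c$ in (iv) gives $0\ge (a-b)\|x\|$, i.e.\ $(1-\delta)^2\le\varepsilon$, a contradiction. Hence there is $k\in S_{X^*}$ with $k(y)=0$ and $k(x)>0$. Two ways to finish:
\begin{itemize}
\item The paper maximizes $|g(x)|$ over the weak$^*$-compact set $\{g\in X^*:\|g\|\le 1,\ |g(y)|\le b\|y\|\}$, which is nonempty because of your $f$. A maximizer $f_0$ must have norm one, since otherwise $\overline{\mathrm{sgn}(f_0(x))}\,f_0+\zeta k$ would stay in the set and be larger at $x$.
\item More directly, $\zeta\mapsto\|f+\zeta k\|$ is continuous, at most $1$ at $\zeta=0$, and unbounded, so some $\zeta_0\ge 0$ gives $\|f+\zeta_0 k\|=1$. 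Then $(f+\zeta_0 k)(y)=f(y)$, and $(f+\zeta_0 k)(x)\ge a\|x\|$ is real, which is exactly (ii).
\end{itemize}

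Your angle argument for (v)$\Leftrightarrow$(vi) is a legitimate alternative to the paper's computation, with two caveats. First, the spherical triangle inequality for $\mathrm{Re}\langle\cdot,\cdot\rangle$ only gives (vi)$\Rightarrow$(v). It holds in all of $X$, so no reduction to $\mathrm{span}\{x,y\}$ is needed. In fact, projecting $u$ onto that plane and renormalizing can destroy $|\langle y,u\rangle|\le b\|y\|$. Second, for (v)$\Rightarrow$(vi) you need an explicit choice of $u$. For example, rotate $x$ away from $y$ in their plane by $\varphi=\max(0,\theta_\varepsilon-\alpha)\le\theta_\delta$, so that the angle to $y$ becomes $\max(\alpha,\theta_\varepsilon)\in[\theta_\varepsilon,\frac{\pi}{2}]$.
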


\begin{remark}\label{rem:unification}
Condition~(iv) makes the unification explicit. Setting $\varepsilon = 0$ 
gives $\|x + \lambda y\| \geq (1-\delta)\|x\|$ for all scalars $\lambda$, 
which is precisely Dragomir's approximate orthogonality~\cite{Drag}. Setting 
$\delta = 0$ gives $\|x + \lambda y\| \geq \|x\| - \varepsilon|\lambda|\|y\|$, 
the equivalent formulation of Chmieli\'nski's $\perp_B^\varepsilon$ ~\cite{Chm2, Alt} in terms of norm inequality. Setting $\delta = \varepsilon = 0$ recovers 
the Birkhoff--James inequality. Condition~(iv) is thus the natural 
interpolant between the Dragomir and Chmieli\'nski formulations, with $\delta$ 
and $\varepsilon$ independently controlling the two relaxations. In the Hilbert 
space case, the sharp constant $L(\delta,\varepsilon)$ satisfies 
$L(0,\varepsilon) = \varepsilon$ (Chmieli\'nski) and 
$L(\delta,0) = \sqrt{\delta(2-\delta)}$ (Dragomir), interpolating between 
the two boundary cases.
\end{remark}

\noindent The single-vector characterization extends uniformly over closed subspaces, 
yielding a description of $\nperp$ relative to a subspace entirely in terms of 
the restricted norm of a single functional.

\begin{theorem}\label{thm:subspace}
Let $Y$ be a closed subspace of $X$ and $x \in X \setminus Y$. Let $\varepsilon, \delta \in [0,1)$ be such that $\varepsilon < (1-\delta)^2$. Then 
the following are equivalent:
\begin{itemize}
    \item[(a)] $x \nperp y$ 
for every $y \in Y$;
    \item[(b)]  there exists $f \in X^*$ with $\|f\| = 1$ such that
\[
|f(x)| \geq (1-\delta)\|x\|, \qquad \|f|_Y\| \leq \frac{\varepsilon}{1-\delta}.
\]
\end{itemize}
\end{theorem}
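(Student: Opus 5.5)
The implication (b)$\Rightarrow$(a) is immediate from Definition~\ref{1st Defn}. If $f\in S_{X^*}$ satisfies $|f(x)|\ge(1-\delta)\|x\|$ and $\|f|_Y\|\le \frac{\varepsilon}{1-\delta}$, then for every $y\in Y$ we have $|f(y)|\le\frac{\varepsilon}{1-\delta}\|f\|\|y\|$. So the same $f$ witnesses $x\nperp y$ for all $y\in Y$ simultaneously. The substance is (a)$\Rightarrow$(b): we must produce one functional that works uniformly over $Y$. My plan is to pass through the norm inequality (iv) of Theorem~\ref{Thm:Equiv} and then use a Hahn--Banach argument with a suitably chosen sublinear functional.

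Write $c=\frac{\varepsilon}{1-\delta}$. By Theorem~\ref{Thm:Equiv}, (i)$\Rightarrow$(iv), applied with $\lambda=-1$ to each nonzero $y\in Y$, we get
\[
\|x-y\|+c\|y\|\ \ge\ (1-\delta)\|x\| \qquad \text{for all } y\in Y,
\]
and this holds trivially for $y=0$. Now define the inf-convolution
\[
p(z)=\inf_{y\in Y}\bigl(\|z-y\|+c\|y\|\bigr), \qquad z\in X.
\]
We check three properties. First, $p$ is a seminorm: it is subadditive because $Y$ is a subspace, and it is absolutely homogeneous. Second, $p(z)\le\|z\|$ (take $y=0$) and $p(y)\le c\|y\|$ for $y\in Y$ (take the infimum point $y$ itself). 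Third, by the display above, $p(x)\ge(1-\delta)\|x\|$. The complex Hahn--Banach theorem then gives $F\in X^*$ with $F(x)=p(x)$ and $|F(z)|\le p(z)$ for all $z$. Consequently $\|F\|\le1$, $\|F|_Y\|\le c$, and $F(x)\ge(1-\delta)\|x\|$.

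The remaining obstacle is the normalisation $\|f\|=1$. Dividing $F$ by $\|F\|$ would inflate the bound on $Y$ to $c/\|F\|$, so instead I will raise the norm without touching $Y$. Since $Y$ is closed and $x\notin Y$, Hahn--Banach provides $h\in X^*$ with $h|_Y=0$ and $h(x)=\operatorname{dist}(x,Y)>0$.

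Consider $t\mapsto\|F+th\|$ for $t\ge0$. It is continuous, it is at most $1$ at $t=0$, and it tends to $\infty$. Hence there is $t_0\ge0$ with $\|F+t_0h\|=1$. Put $f=F+t_0h$. Then:
\begin{itemize}
\item $f|_Y=F|_Y$, so $\|f|_Y\|\le c$;
\item $f(x)=F(x)+t_0\operatorname{dist}(x,Y)\ge(1-\delta)\|x\|$.
\end{itemize}
This proves (b).
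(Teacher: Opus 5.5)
Your proof is correct. The first half matches the paper's argument:
\begin{itemize}
\item the same inf-convolution $p(z)=\inf_{y\in Y}\bigl(\|z-y\|+c\|y\|\bigr)$;
\item the same lower bound $p(x)\ge(1-\delta)\|x\|$, obtained from condition~(iv) of Theorem~\ref{Thm:Equiv};
\item a Hahn--Banach extension dominated by $p$, giving a functional of norm at most $1$ with $\|F|_Y\|\le c$.
\end{itemize}

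You differ in the normalisation to norm one. The paper works on the set $\{g\in X^*:\|g\|\le 1,\ \|g|_Y\|\le c\}$. It uses Banach--Alaoglu to choose a maximiser $f_0$ of $|g(x)|$ there. It then argues by contradiction that $\|f_0\|=1$: otherwise $\overline{\mathrm{sgn}(f_0(x))}\,f_0+\zeta k$, with $k|_Y=0$ and $k(x)>0$, would be admissible and take a strictly larger value at $x$.

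You use the same auxiliary functional (your $h$). In place of the compactness-plus-maximality argument, you apply the intermediate value theorem to $t\mapsto\|F+th\|$. This is shorter, constructive, and avoids weak$^*$-compactness entirely. What makes it work is that you fixed the phase in advance by taking $F(x)=p(x)>0$. Then adding $t_0h(x)\ge 0$ cannot decrease $|f(x)|$; the paper handles the phase with the factor $\overline{\mathrm{sgn}(f_0(x))}$ instead.

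The paper's route gives, as a by-product, a functional that maximises $|g(x)|$ among all admissible $g$, but the statement does not need this. Your normalisation device would also shorten the corresponding step in the proof of (iv)$\Rightarrow$(i) of Theorem~\ref{Thm:Equiv}, using a functional that vanishes at $y$ and is positive at $x$.
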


\noindent The next result addresses the proximity structure of $\nperp$. The following result proved in \cite{ChmStWoj} serves as an alternative characterization of approximate Birkhoff-James orthogonality.

\begin{theorem*}\cite[Theorem 2.2]{ChmStWoj}
Let $X$ be a real normed space. For $\varepsilon\in [0,1)$ and $x,y\in X$
\[
x\perp^{\varepsilon}_B y \iff \exists~z\in \operatorname{span}\{x,y\}:~ x\perp_B z,~\|z-y\|\leq \varepsilon\|y\|
\]
\end{theorem*}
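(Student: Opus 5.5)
The statement is Chmieli\'nski--Stypu\l a--W\'ojcik's characterization: for a real normed space $X$, $x\perp_B^\varepsilon y$ holds exactly when $x\perp_B z$ for some $z\in\operatorname{span}\{x,y\}$ with $\|z-y\|\le\varepsilon\|y\|$. I will use the functional form quoted in the introduction:
\[
x\perp_B^\varepsilon y\iff \exists f\in S_{X^*}:\ f(x)=\|x\|,\ |f(y)|\le\varepsilon\|y\|.
\]
I will also use the fact that $x\perp_B z$ iff some $g\in S_{X^*}$ has $g(x)=\|x\|$ and $g(z)=0$. The trivial cases $x=0$ (then $z=0$ works) and $y=0$ (take $z=0$) are dispatched first, so assume $x,y\neq0$.

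\emph{($\Leftarrow$).} Suppose $x\perp_B z$ with $\|z-y\|\le\varepsilon\|y\|$, and take $g\in J(x)$ with $g(z)=0$. Then
\[
|g(y)|=|g(y-z)|\le\|y-z\|\le\varepsilon\|y\|.
\]
So $g$ witnesses $x\perp_B^\varepsilon y$ via the functional characterization.

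\emph{($\Rightarrow$).} Take $f\in S_{X^*}$ with $f(x)=\|x\|$ and $|f(y)|\le\varepsilon\|y\|$. Set
\[
z=y-\frac{f(y)}{\|x\|}\,x\in\operatorname{span}\{x,y\}.
\]
Then $f(z)=f(y)-f(y)=0$, and $f$ is a norming functional for $x$, so $x\perp_B z$. The remaining estimate is
\[
\|z-y\|=\frac{|f(y)|}{\|x\|}\,\|x\|=|f(y)|\le\varepsilon\|y\|.
\]
Hence $z$ has all the required properties.

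The argument is short once the functional characterization is available. The only real content is choosing the correction of $y$ along the direction $x$, scaled so that $f$ kills it. That characterization is itself the genuine obstacle: it requires the Hahn--Banach separation argument on $\operatorname{span}\{x,y\}$, which I take as given from \cite{ChmStWoj, woj}. Realness is not needed in the steps above; over $\mathbb{C}$ the same construction works verbatim, with $f(x)=\|x\|$ still available.
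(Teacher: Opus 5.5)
Your argument is correct and is essentially the paper's proof of Theorem~\ref{thm:Linspace} specialized to $\delta=0$. There one takes $z=y-f(y)u$ with $u$ a norm-attaining unit vector of $f$ in $\operatorname{span}\{x,y\}$; since your $f$ satisfies $f(x)=\|x\|$, you can take $u=x/\|x\|$ directly, which makes the paper's restriction to $W$ and its Hahn--Banach extension step unnecessary. One small slip in the degenerate case: if $x=0\neq y$, then $z=0$ fails because $\|z-y\|=\|y\|>\varepsilon\|y\|$, but $z=y$ works since $0\perp_B y$.
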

\noindent This result, later extended in the complex case in \cite{Woj1}. We generalize this to the two-parameter setting.

\begin{theorem}\label{thm:Linspace}
Let $X$ be a normed linear space and $\varepsilon, \delta \in [0,1)$ be such that 
$\varepsilon < (1-\delta)^2$. Then for any $x,y\in X\setminus \{0\}$
\[
x\nperp y \iff \exists~z\in \operatorname{span}\{x,y\}:~x\perp_\delta^0 z,~\text{and}~\|z-y\|\leq \frac{\varepsilon}{1-\delta}\|y\|.
\]
\end{theorem}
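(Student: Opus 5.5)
We prove the two implications separately, working throughout with functionals of norm one, as permitted by Definition~\ref{1st Defn}. Write $c=\frac{\varepsilon}{1-\delta}$. Recall that $x\perp_\delta^0 z$ means that some $g\in S_{X^*}$ satisfies $|g(x)|\geq(1-\delta)\|x\|$ and $g(z)=0$.

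\emph{($\Leftarrow$)} This direction is a direct estimate. Suppose $z\in\operatorname{span}\{x,y\}$ satisfies $x\perp_\delta^0 z$ and $\|z-y\|\leq c\|y\|$, and let $g\in S_{X^*}$ witness $x\perp_\delta^0 z$. Then
\[
|g(y)|=|g(y-z)|\leq \|y-z\|\leq c\|y\|,
\]
while $|g(x)|\geq (1-\delta)\|x\|$ still holds. Hence $g$ witnesses $x\nperp y$. Note that this direction does not use $z\in\operatorname{span}\{x,y\}$.

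\emph{($\Rightarrow$)} Here we must construct $z$. By Theorem~\ref{Thm:Equiv}(ii), choose $f\in S_{X^*}$ with $f(x)\geq(1-\delta)\|x\|>0$ real and $|f(y)|\leq c\|y\|$. The natural candidate is the projection of $y$ onto $\ker f$ along $x$:
\[
z=y-\frac{f(y)}{f(x)}\,x .
\]
Then $z\in\operatorname{span}\{x,y\}$, $f(z)=0$ and $|f(x)|\geq(1-\delta)\|x\|$, so $x\perp_\delta^0 z$ with witness $f$. However,
\[
\|z-y\|=\frac{|f(y)|\,\|x\|}{f(x)}\leq \frac{c}{1-\delta}\|y\|,
\]
which loses a factor $(1-\delta)^{-1}$. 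This loss is the main obstacle.

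To remove it, we should not move along $x$. Instead we correct $y$ by a vector $w$ on which $f$ nearly attains its norm, while keeping $z$ inside $\operatorname{span}\{x,y\}$. Restrict to the (at most two-dimensional) space $V=\operatorname{span}\{x,y\}$ and let $\varphi=f|_V$, so $\|\varphi\|\leq 1$. If $\varphi=0$, take $z=y$. Otherwise, since $V$ is finite-dimensional, $\varphi$ attains its norm at some $w\in S_V$ with $\varphi(w)=\|\varphi\|$. Put
\[
z=y-\frac{f(y)}{\|\varphi\|}\,w\in V .
\]
Then $f(z)=0$ and $\|z-y\|=|f(y)|/\|\varphi\|$.

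So we need $|f(y)|\leq c\|\varphi\|\,\|y\|$, but we only know $|f(y)|\leq c\|y\|$ with $\|\varphi\|\leq1$. We fix this by replacing $f$ with a better witness. Apply Theorem~\ref{Thm:Equiv} inside the normed space $V$ itself: condition (iv) is intrinsic to $V$, so $x\nperp y$ holds in $V$, and (ii) in $V$ gives $\psi\in V^*$ with $\|\psi\|=1$, $\psi(x)\geq(1-\delta)\|x\|$ and $|\psi(y)|\leq c\|y\|$. Let $w\in S_V$ with $\psi(w)=1$, which exists by compactness, and set $z=y-\psi(y)w$. Then $\psi(z)=0$ and $\|z-y\|=|\psi(y)|\leq c\|y\|$. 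Finally, extend $\psi$ by Hahn--Banach to $g\in S_{X^*}$. Since $g(x)=\psi(x)\geq(1-\delta)\|x\|$ and $g(z)=0$, we get $x\perp_\delta^0 z$ in $X$, which completes the forward direction.

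The key point is this reduction to $V$ via the norm characterization (iv), which removes the normalization loss. The only remaining check is that (iv) in $X$ and in $V$ involve the same vectors, which is immediate.
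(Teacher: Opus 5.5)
Your proof is correct and, once past the exploratory detour, it is essentially the paper's argument. Both restrict to $\operatorname{span}\{x,y\}$, where your appeal to the intrinsic condition (iv) justifies the transfer that the paper only asserts. Both then take a norm-one witness $\psi$ there, set $z=y-\psi(y)w$ with $\psi(w)=1$, and extend $\psi$ by Hahn--Banach; the converse is the same one-line estimate.
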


\medskip

\noindent The following theorem characterizes norming approximate orthogonality in 
$X^*$ through point evaluations in $X$, with the reflexive case yielding a 
complete equivalence.

\begin{theorem}\label{thm:dual}
Let $X$ be a Banach space, $f, g \in X^*$, and $\delta, \varepsilon \in [0,1)$ with 
$\varepsilon < (1-\delta)^2$. Then $f \perp_\delta^\varepsilon g$ in $X^*$ if there 
exists $x \in S_X$ such that
\[
|f(x)| \geq (1-\delta)\|f\|, \qquad |g(x)| \leq \frac{\varepsilon}{1-\delta}\|g\|.
\]
\end{theorem}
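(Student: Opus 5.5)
The plan is to use the canonical embedding $J\colon X\to X^{**}$, $J(x)(h)=h(x)$, which is an isometry. Given $x\in S_X$ with the two stated inequalities, the element $F:=J(x)\in X^{**}=(X^*)^*$ is the candidate witnessing functional in Definition~\ref{1st Defn}, applied to the normed space $X^*$ and the pair $f,g$. We may assume $f\neq 0$. The case $g=0$ is not special: the second condition is then trivially satisfied.

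First, we have $F\neq 0$, because $\|F\|=\|x\|=1$. Next, $F(f)=f(x)$, so
\[
|F(f)|=|f(x)|\geq (1-\delta)\|f\| = (1-\delta)\|F\|\|f\|.
\]
Similarly, $F(g)=g(x)$, and hence
\[
|F(g)|\leq \tfrac{\varepsilon}{1-\delta}\|g\|=\tfrac{\varepsilon}{1-\delta}\|F\|\|g\|.
\]
These are exactly the two conditions of Definition~\ref{1st Defn} with $F\in (X^*)^*$. It follows that $f\perp_\delta^\varepsilon g$ in $X^*$.

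The only point needing care is the isometry $\|J(x)\|=\|x\|$. This follows from the Hahn--Banach theorem: there is a support functional in $J(x)$, in the notation of the introduction. This is the whole content of the argument, so I expect no real obstacle. Completeness of $X$ is not even used.

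For the reflexive converse mentioned before the statement, one would reverse this argument. Any witnessing $F\in S_{X^{**}}$ equals $J(x)$ for some $x\in S_X$, and the same two identities then give the stated inequalities. This direction is not needed for the implication stated here.
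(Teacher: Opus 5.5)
Your proposal is correct and is the same as the paper's proof: both take the image of $x$ under the canonical isometric embedding $X\to X^{**}$ as the witnessing functional on $X^*$, and both check the two inequalities directly using $\|x\|=1$. The only issue is cosmetic: naming the embedding $J$ clashes with the paper's notation $J(x)$ for the set of support functionals at $x$, and you use that set in the same paragraph.
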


\noindent Building on Theorem~\ref{thm:dual}, we compute the
orthogonality threshold exactly, recovering at $\delta = \varepsilon = 0$ 
the classical characterization of best approximations from a subspace via 
Birkhoff--James orthogonality. Section~\ref{sec:ck} applies the foregoing theory to the space $C(K,X)$ of continuous $X$-valued functions on a compact Hausdorff space $K$. We write $\mathrm{rcabv}(K, X^*)$ to denote the Banach space of regular countably additive 
$X^*$-valued Borel measures of bounded variation on $K$, equipped with the total 
variation norm; we refer to~\cite{CemMen}. Using the Riesz--Markov representation $(C(K,X))^* \cong \mathrm{rcabv}(K,X^*)$ and the Krein--Milman description of the extreme points of the relevant face of the dual ball~\cite{ruess, Lima}, we characterize norming approximate orthogonality in $C(K,X)$ in terms of the $\delta$-norming set. As a 
consequence, through the isometric embedding $\mathcal{K}(X) \hookrightarrow C(B_X^w, X)$ ($B_X^w$: the closed unit ball of a reflexive Banach space $X$ equipped with the 
weak topology), we obtain a characterization of approximate Birkhoff--James 
orthogonality between compact operators entirely in terms of the norm-attainment 
set $M_T = \{x \in  B_X : \|Tx\| = \|T\|\}$. Section \ref{sec5} examines
the geometry of $\nperp$: its strictly two-parameter nature, its symmetry in Hilbert
spaces and its failure of symmetry in general normed linear spaces, and a question on the
rigidity of that symmetry.

\section{Norming Approximate Orthogonality}\label{sec:norming}

\noindent In this Section we prove Theorem \ref{Thm:Equiv}, Theorem \ref{thm:subspace} and Theorem \ref{thm:Linspace}, which mainly deals with equivalent criterion of norming approximate orthogonality between two vectors and between a vector and a subspace. We start with the proof of Theorem \ref{Thm:Equiv}. We begin with a proposition which we need to prove Theorem \ref{Thm:Equiv}.

\begin{proposition}\label{prop:L}
Let $\delta,\varepsilon \in [0,1)$ with $\varepsilon < (1-\delta)^2$, and define
\[
L(\delta,\varepsilon) := \cos(\theta_\varepsilon-\theta_\delta).
\]
Then
\[
L(\delta,\varepsilon) = \varepsilon +
\frac{\sqrt{\delta(2-\delta)\bigl((1-\delta)^2-\varepsilon^2\bigr)}}{1-\delta}
\qquad\text{and}\qquad L(\delta,\varepsilon)<1 .
\]
\end{proposition}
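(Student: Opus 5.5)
The plan is to expand $L(\delta,\varepsilon)=\cos(\theta_\varepsilon-\theta_\delta)$ with the cosine subtraction formula, express every trigonometric quantity through $\delta$ and $\varepsilon$ using the defining relations $\cos\theta_\delta=1-\delta$ and $\cos\theta_\varepsilon=\frac{\varepsilon}{1-\delta}$, and then obtain the strict bound $L<1$ from the ordering $\theta_\varepsilon>\theta_\delta$ noted right after Definition~\ref{1st Defn}.

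\textbf{Step 1 (the formula).} We write
\[
\cos(\theta_\varepsilon-\theta_\delta)=\cos\theta_\varepsilon\cos\theta_\delta+\sin\theta_\varepsilon\sin\theta_\delta .
\]
The first product equals $\frac{\varepsilon}{1-\delta}\cdot(1-\delta)=\varepsilon$.

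For the sines we use that $\theta_\delta\in[0,\frac{\pi}{2})$ and $\theta_\varepsilon\in(0,\frac{\pi}{2}]$, so both sines are non-negative and we may take positive square roots:
\[
\sin\theta_\delta=\sqrt{1-(1-\delta)^2}=\sqrt{\delta(2-\delta)},
\qquad
\sin\theta_\varepsilon=\sqrt{1-\tfrac{\varepsilon^2}{(1-\delta)^2}}=\frac{\sqrt{(1-\delta)^2-\varepsilon^2}}{1-\delta}.
\]
The second expression is legitimate because $\varepsilon<(1-\delta)^2\le 1-\delta$ gives $\varepsilon<1-\delta$, so the radicand is positive. It is this same inequality, $\frac{\varepsilon}{1-\delta}\in[0,1)$, that makes $\theta_\varepsilon$ well defined in the first place.

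Multiplying the two sines gives $\frac{\sqrt{\delta(2-\delta)((1-\delta)^2-\varepsilon^2)}}{1-\delta}$. Adding $\varepsilon$ yields the claimed closed form.

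\textbf{Step 2 (the strict bound $L<1$).} Since $\cos$ is strictly decreasing on $[0,\frac{\pi}{2}]$, the inequality $\varepsilon<(1-\delta)^2$, i.e.\ $\frac{\varepsilon}{1-\delta}<1-\delta$, is equivalent to $\cos\theta_\varepsilon<\cos\theta_\delta$, and hence to $\theta_\varepsilon>\theta_\delta$.

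Combining this with the ranges of $\theta_\delta$ and $\theta_\varepsilon$ gives $0<\theta_\varepsilon-\theta_\delta\le\frac{\pi}{2}$. On this interval $\cos t<1$, so $L(\delta,\varepsilon)<1$. As a by-product we also get $L\ge 0$.

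There is no substantial obstacle here. The only points needing care are the sign choices for the sines, handled by the stated angle ranges, and the observation that $\varepsilon<(1-\delta)^2$ forces $\varepsilon<1-\delta$, which guarantees that $\theta_\varepsilon$ exists. As a consistency check against Remark~\ref{rem:unification}, setting $\delta=0$ gives $L=\varepsilon$, and setting $\varepsilon=0$ gives $L=\sqrt{\delta(2-\delta)}$.
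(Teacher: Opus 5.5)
Your proposal is correct and follows essentially the same route as the paper: expand with the cosine subtraction formula using the non-negative sines, then obtain $L(\delta,\varepsilon)<1$ from $\theta_\varepsilon>\theta_\delta$, which places $\theta_\varepsilon-\theta_\delta$ in $(0,\frac{\pi}{2}]$. Your extra remark that $\varepsilon<(1-\delta)^2\le 1-\delta$ makes $\theta_\varepsilon$ well defined and the radicand positive is a small point the paper leaves implicit.
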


\begin{proof}
Since $\theta_\delta,\theta_\varepsilon\in[0,\frac{\pi}{2}]$, both sines are non-negative
and
\[
\sin\theta_\delta = \sqrt{1-(1-\delta)^2} = \sqrt{\delta(2-\delta)},\qquad
\sin\theta_\varepsilon = \frac{\sqrt{(1-\delta)^2-\varepsilon^2}}{1-\delta}.
\]
Hence
\[
\cos(\theta_\varepsilon-\theta_\delta)
= \cos\theta_\varepsilon\cos\theta_\delta + \sin\theta_\varepsilon\sin\theta_\delta
= \varepsilon + \frac{\sqrt{\delta(2-\delta)\bigl((1-\delta)^2-\varepsilon^2\bigr)}}{1-\delta}.
\]
Moreover $\varepsilon<(1-\delta)^2$ gives $\cos\theta_\varepsilon<\cos\theta_\delta$, so
$\theta_\varepsilon-\theta_\delta\in(0,\frac{\pi}{2}]$ and $L(\delta,\varepsilon)<1$.
\end{proof}

\noindent In the Hilbert space setting, the constant $L(\delta,\varepsilon)$ determines the strength of the orthogonality relation $\nperp$ (see Theorem \ref{Thm:Equiv}). In order for this condition to be non-trivial, it is necessary that $L(\delta,\varepsilon) < 1$. The above observation confirms that this is indeed the case whenever $\varepsilon < (1-\delta)^2$.

\medskip

\noindent Now, we prove Theorem \ref{Thm:Equiv}.

\begin{proof}[Proof of theorem \ref{Thm:Equiv}]
Evidently, $(i)$, $(ii)$, $(iii)$ are mutually equivalent and they are equivalent to $(vi)$, through the Riesz representation Theorem, if additionally, we assume $X$ is a Hilbert space. Therefore, it only remains to prove the equivalence of $(i)$ and $(iv)$ for general normed linear spaces, and equivalence of $(i)$ and $(v)$ under the additional assumption of $X$ is a Hilbert space. Since norming approximate orthogonality is homogeneous, without loss of generality we assume that $\|x\|=\|y\|=1$.

\medskip

\noindent $(i)\implies(iv)$. Suppose there exists $f\in X^*$ 
satisfying~$(i)$. Then for any scalar $\lambda$,
\[
\|f\|\,\|x+\lambda y\|\geq|f(x+\lambda y)|\geq|f(x)|-|\lambda||f(y)|
\geq(1-\delta)\|f\|-\frac{\varepsilon}{1-\delta}|\lambda|\|f\|.
\]
Dividing through by $\|f\|>0$ yields~$(iv)$.

\medskip

\noindent $(iv)\implies(i)$. Define a sublinear functional 
$p:X\to\mathbb{R}$ by
\[
p(v)=\inf_{\lambda\in\mathbb{C}}\left(\|v+\lambda y\|
+\frac{\varepsilon}{1-\delta}|\lambda|\right),\quad v\in X.
\]
Positive homogeneity is immediate. Subadditivity follows from the 
triangle inequality: for any $v_1,v_2\in X$ and $\eta>0$, choosing 
$\lambda_1,\lambda_2$ nearly optimal and setting $\lambda=\lambda_1+\lambda_2$,
\[
p(v_1+v_2)\leq\|v_1+\lambda_1 y\|+\|v_2+\lambda_2 y\|
+\frac{\varepsilon}{1-\delta}(|\lambda_1|+|\lambda_2|)
\leq p(v_1)+p(v_2)+\eta.
\]
Define $h:\mathrm{span}\{x\}\to\mathbb{C}$ by $h(\alpha x)=(1-\delta)\alpha$. 
For any $\alpha\neq 0$ and $\mu\in\mathbb{C}$, condition~$(iv)$ gives
\[
\|x+\mu y\|+\frac{\varepsilon}{1-\delta}|\mu|\geq(1-\delta),
\]
so that $p(\alpha x)\geq|\alpha|(1-\delta)=|h(\alpha x)|$. By the 
Hahn--Banach theorem, $h$ extends to $f\in X^*$ with $|f(v)|\leq p(v)$ 
for all $v\in X$. Taking $\lambda=0$ gives $\|f\|\leq 1$, and taking 
$\lambda=-1$ gives $|f(y)|\leq p(y)\leq\frac{\varepsilon}{1-\delta}$.

\medskip

\noindent Consider the set
\[
\widetilde{N}^\varepsilon_\delta=\left\{g\in X^*:\|g\|\leq 1,\; 
|g(y)|\leq\frac{\varepsilon}{1-\delta}\right\}.
\]
The functional $f$ constructed above belongs to $\widetilde{N}^\varepsilon_\delta$, 
so it is nonempty. It is convex and weak$^*$-closed, hence 
weak$^*$-compact by the Banach--Alaoglu theorem. The map $g\mapsto|g(x)|$ 
is weak$^*$-continuous, so it attains its maximum on 
$\widetilde{N}^\varepsilon_\delta$ at some $f_0\in\widetilde{N}^\varepsilon_\delta$. 
Since $f\in\widetilde{N}^\varepsilon_\delta$,
\[
|f_0(x)|\geq|f(x)|=1-\delta>0.
\]
We shall now prove $\|f_0\|=1$. Since $\varepsilon < (1-\delta)^2$, the 
set $\{x, y\}$ is linearly independent: indeed, if $x = cy$ for some scalar 
$c$, then
\[
\|x-cy\|\geq (1-\delta)\|x\|-\frac{\varepsilon}{1-\delta}\|cy\|=\|x\|\left((1-\delta)-\frac{\varepsilon}{1-\delta}\right)
\]
which gives
\[
(1-\delta)^2\leq \varepsilon,
\]
a contradiction. By the Hahn--Banach Theorem there therefore exists 
$k \in X^*$ with $k(x) \neq 0$, $k(y) = 0$ and $\|k\| = 1$; 
 replacing $k$ by $\overline{\mathrm{sgn}(k(x))}\,k$, we may and do assume $k(x)>0$.
If $\|f_0\|<1$, for sufficiently small $\zeta>0$ with $\|f_0\|+\zeta\leq 1$, define
\[
l= \overline{\mathrm{sgn}(f_0(x))}\,f_0+\zeta k.
\]
Then $\|l\|\leq\|f_0\|+\zeta\leq 1$ and $|l(y)|=|f_0(y)|
\leq\frac{\varepsilon}{1-\delta}$, so $l\in\widetilde{N}^\varepsilon_\delta$. 
However,  $l(x)=|f_0(x)|+\zeta k(x)$ is a sum of two positive reals, 
whence
\[
|l(x)|=|f_0(x)|+\zeta  k(x)>|f_0(x)|,
\]
contradicting the maximality of $f_0$. Therefore $\|f_0\|=1$, and
\[
|f_0(x)|\geq 1-\delta=(1-\delta)\|f_0\|\|x\|,\qquad 
|f_0(y)|\leq\frac{\varepsilon}{1-\delta}=\frac{\varepsilon}{1-\delta}
\|f_0\|\|y\|,
\]
and $f_0$ witnesses condition~$(i)$.

\bigskip

\noindent For the rest of the proof, we now assume that $X$ is a Hilbert space.

\bigskip

\noindent $(iv)\iff(v)$. For any scalar $\lambda=te^{i\theta}$ with $t=|\lambda|\geq 0$,
\[
\|x+\lambda y\|^2=1+2t\,\mathrm{Re}(e^{i\theta}\langle x,y\rangle)+t^2,
\]
which is minimized over all $\lambda$ with $|\lambda|=t$ when $e^{i\theta}\langle x,y\rangle=-|\langle x,y\rangle|$, giving
\[
\min_{|\lambda|=t}\|x+\lambda y\|^2=1-2t|\langle x,y\rangle|+t^2.
\]
Since the right side of $(iv)$ depends only on $t=|\lambda|$, condition $(iv)$ holds for all scalars $\lambda$ if and only if
\[
\forall ~t\geq 0: \qquad \sqrt{1-2t|\langle x,y\rangle|+t^2}\geq(1-\delta)-\frac{\varepsilon}{1-\delta}t.
\]
If the right side is negative the inequality holds trivially, 
so we may assume that $(1-\delta)-\frac{\varepsilon}{1-\delta}t\geq 0$, 
 that is $t\leq T:=\frac{(1-\delta)^2}{\varepsilon}$ (with $T=\infty$ when $\varepsilon=0$) and squaring both sides, condition $(iv)$ is equivalent to
\begin{align}
\forall ~ t\in[0,T]: \qquad 1-2t|\langle x,y\rangle|+t^2\geq\left[(1-\delta)-\frac{\varepsilon}{1-\delta}t\right]^2.\tag{II}
\end{align}
Expanding and rearranging, (II) holds if and only if
\begin{align}
\forall~ t\in[0,T]:\qquad Q(t):=\left(\frac{(1-\delta)^2-\varepsilon^2}
{(1-\delta)^2}\right)t^2+2\left(\varepsilon-|\langle x,y\rangle|
\right)t+\delta(2-\delta)\geq 0.\tag{III}
\end{align}
We set
\[
Q(t) = At^2+2Bt+C, \qquad t\geq 0.
\]
Since $\delta\in[0,1)$ we have $(1-\delta)^2\leq 1-\delta$, and therefore 
$\varepsilon<(1-\delta)^2$ implies $\varepsilon^2<(1-\delta)^2$, so the 
leading coefficient $A=\frac{(1-\delta)^2-\varepsilon^2}{(1-\delta)^2}>0$, 
and the constant term $C=\delta(2-\delta)\geq 0$. Write $B=\varepsilon
-|\langle x,y\rangle|$. We now consider the following cases:

\medskip

\noindent Case I: If $B\geq 0$, that is $|\langle x,y\rangle|
\leq\varepsilon$. Then $Q'(t)=2At+2B\geq 0$ for all $t\geq 0$, so $Q$ 
is increasing on $[0,\infty)$ and $Q(t)\geq Q(0)=C\geq 0$. In this case 
$(III)$ holds automatically and so does $(v)$, since $|\langle x,y
\rangle|\leq\varepsilon\leq L(\delta,\varepsilon)$.

\medskip

\noindent Case II: If $B<0$, that is 
$|\langle x,y\rangle|>\varepsilon$. Since $A>0$, we have $Q(t)\to\infty$ as $t\to\infty$.
Therefore, the continuous convex function $Q$ attains its minimum on
$[0,\infty)$ at the critical point $t^*=-B/A>0$, with minimum value $C-B^2/A$. We first check that this critical point lies in the range $[0,T]$ on which the 
squaring above was legitimate. Since $\|x\|=\|y\|=1$ we have 
$|\langle x,y\rangle|\leq 1$, and therefore
\[
t^*=\frac{\bigl(|\langle x,y\rangle|-\varepsilon\bigr)(1-\delta)^2}
{(1-\delta)^2-\varepsilon^2}
\leq \frac{(1-\varepsilon)(1-\delta)^2}{(1-\delta)^2-\varepsilon^2}
\leq \frac{(1-\delta)^2}{\varepsilon}=T,
\]
the last inequality being equivalent to 
$\varepsilon(1-\varepsilon)\leq (1-\delta)^2-\varepsilon^2$, i.e.\ to 
$\varepsilon\leq (1-\delta)^2$, which holds by hypothesis. Consequently $Q\geq 0$ on 
$[0,T]$ forces $Q\geq 0$ on all of $[0,\infty)$, and (III) is equivalent to the 
corresponding statement on $[0,\infty)$. Therefore 
$Q(t)\geq 0$ for all $t\geq 0$ if and only if
\[
C-\frac{B^2}{A}\geq 0\iff \Delta=4B^2-4AC\leq 0.
\]
Substituting the values of $A$, $B$ and $C$, this becomes
\[
\left(|\langle x,y\rangle|-\varepsilon\right)^2\leq
\frac{\delta(2-\delta)\left((1-\delta)^2-\varepsilon^2\right)}{(1-\delta)^2}.
\]
Since the right side is non-negative, taking square roots gives
\[
|\langle x,y\rangle|\leq\varepsilon+
\frac{\sqrt{\delta(2-\delta)\left((1-\delta)^2-\varepsilon^2\right)}}
{1-\delta},
\]
which is precisely condition $(v)$, by Proposition~\ref{prop:L}.
\bigskip

\noindent This completes the proof.
\end{proof}

\noindent Next we prove Theorem \ref{thm:subspace}.

\begin{proof}[Proof of theorem \ref{thm:subspace}]
The implication  $(b)\Rightarrow(a)$ follows immediately from the
definition. We prove  $(a)\Rightarrow(b)$. By homogeneity, we may assume $\|x\|=1$. Define
\[
p(v)=\inf_{w\in Y}\left(
\|v+w\|
+\frac{\varepsilon}{1-\delta}\|w\|
\right),
\qquad v\in X.
\]
Arguing exactly as in the proof of Theorem~\ref{Thm:Equiv},
$p$ is sublinear. Since $x\nperp y$ for every $y\in Y$, Theorem~\ref{Thm:Equiv}(iv)
implies
\[
\|x+w\|
+
\frac{\varepsilon}{1-\delta}\|w\|
\geq
1-\delta
\qquad \text{for all } w\in Y.
\]
Hence $p(x)\geq 1-\delta$. Define $h:\mathrm{span}\{x\}\to\mathbb{C}$ by
\[
h(\alpha x)=(1-\delta)\alpha.
\]
Then
\[
|h(\alpha x)|
=
|\alpha|(1-\delta)
\leq
p(\alpha x).
\]
By the Hahn--Banach theorem, $h$ extends to
$f\in X^*$ satisfying $|f(v)|\leq p(v)$ for all $v\in X.$ By taking $w=0$, we have $\|f\|\leq 1$. Also, for every $z\in Y$, $|f(z)|\leq p(z)\leq \frac{\varepsilon}{1-\delta}\|z\|,$ and therefore
\[
\|f|_Y\|
\leq
\frac{\varepsilon}{1-\delta}.
\]

\medskip

\noindent Now, consider
\[
\widetilde N_\delta^\varepsilon(Y)
=
\left\{
g\in X^*:
\|g\|\le 1,\ 
\|g|_Y\|
\le
\frac{\varepsilon}{1-\delta}
\right\}.
\]
As in the proof of Theorem~\ref{Thm:Equiv}, the weak$^*$-compactness
of $\widetilde N_\delta^\varepsilon(Y)$ yields a functional
$f_0\in \widetilde N_\delta^\varepsilon(Y)$ maximizing $|g(x)|$.
Since $|f(x)|=1-\delta$, we have
\[
|f_0(x)|\geq 1-\delta.
\]
Using the fact that $x\notin Y$, and mimicking the same argument as in
Theorem~\ref{Thm:Equiv}  (including the modification of the auxiliary 
functional $k$ so that $k(x)>0$) shows that $\|f_0\|=1$. Consequently,
\[
|f_0(x)|
\geq
1-\delta
=
(1-\delta)\|f_0\|\,\|x\|,
\]
and
\[
\|f_0|_Y\|
\leq
\frac{\varepsilon}{1-\delta}.
\]
Thus $f_0$ satisfies  $(b)$. This completes the proof.
\end{proof}

\begin{remark}
Theorem~\ref{thm:subspace} reduces to Theorem~\ref{Thm:Equiv} in the case $Y =
\mathrm{span}\{y\}$, since $\|f|_Y\| \leq \frac{\varepsilon}{1-\delta}$ is then equivalent to
$|f(y)| \leq \frac{\varepsilon}{1-\delta}\|y\|$. 
\end{remark}

\begin{proof}[Proof of Theorem \ref{thm:Linspace}]
We first prove the necessity. Since $x\nperp y$, it follows that $\{x,y\}$ is linearly independent. Let $W$ be the two-dimensional subspace spanned by $x$ and $y$. Observe that $x\nperp y$ in $X$ if and only if $x\nperp y$ in $W$. Therefore, there exists $f\in S_{W^*}$ such that $f(x)\geq (1-\delta)\|x\|$ and $|f(y)|\leq \frac{\varepsilon}{(1-\delta)}\|y\|$. Let $u\in S_W$ be a norm attaining point of $f$ with $f(u)=1$. Choose
\[
z=y-f(y)u,
\]
which lies inside $W$ and satisfies $f(z)=0$. Let $\widetilde{f}$ be a Hahn-Banach extension of $f$. Then $\widetilde{f}(x)=f(x)\geq (1-\delta)\|x\|$ and $\widetilde{f}(z)=f(z)=0$, proving $x\perp_\delta^0 z$. Also, $\|z-y\|=|f(y)|\leq \frac{\varepsilon}{(1-\delta)}\|y\|$.

\medskip

\noindent The sufficiency is straightforward. Since $x\perp_\delta^0 z$, there exists $g\in S_{X^*}$ such that $g(x)\geq (1-\delta)\|x\|$ and $g(z)=0$. It now follows from the hypothesis that $|g(y)|=|g(y)-g(z)|\leq \|y-z\|\leq \frac{\varepsilon}{(1-\delta)}\|y\|.$ This completes the proof.
\end{proof}

\section{Dual formulation of Norming Approximate Orthogonality}\label{sec:dual}

\noindent We start with the proof of Theorem \ref{thm:dual} which gives a sufficient condition for norming approximate orthogonality of continuous linear functionals.

\begin{proof}[Proof of theorem \ref{thm:dual}]
Recall that $f \perp_\delta^\varepsilon g$ in $X^*$ means there exists $\Gamma \in X^{**}$ 
such that
\[
|\Gamma(f)| \geq (1-\delta)\|\Gamma\|\|f\|, \qquad |\Gamma(g)| \leq \frac{\varepsilon}{1-\delta}\|\Gamma\|\|g\|.
\]
Let $\pi: X \to X^{**}$ denote the canonical isometric embedding, defined by 
$\pi(x)(h) = h(x)$ for all $h \in X^*$. Suppose $x \in X$ satisfies the two 
conditions as in the statement of the theorem. Taking $\Gamma = \pi(x)$, we have $\|\Gamma\| = \|x\|$, and
\[
|\Gamma(f)| = |f(x)| \geq (1-\delta)\|f\|\|x\| = (1-\delta)\|\Gamma\|\|f\|,
\]
\[
|\Gamma(g)| = |g(x)| \leq \frac{\varepsilon}{1-\delta}\|g\|\|x\| = \frac{\varepsilon}{1-\delta}\|\Gamma\|\|g\|.
\]
Thus $\pi(x)$ witnesses $f \perp_\delta^\varepsilon g$ in $X^*$, and the proof is complete.
\end{proof}

\noindent In the reflexive case, the converse also holds.

\begin{corollary}\label{cor:reflexdual}
Let $X$ be a reflexive Banach space, $f, g \in X^*$, and $\delta, \varepsilon \in 
[0,1)$ with $\varepsilon < (1-\delta)^2$. Then $f \perp_\delta^\varepsilon g$ in 
$X^*$ if and only if there exists $x \in  S_X$ such that
\[
|f(x)| \geq (1-\delta)\|f\|, \qquad |g(x)| \leq \frac{\varepsilon}{1-\delta}\|g\|.
\]
\end{corollary}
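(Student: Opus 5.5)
The sufficiency direction is exactly Theorem~\ref{thm:dual}, so only necessity needs an argument. The plan is to use reflexivity to pull the witnessing functional on $X^*$ back to a point of $X$.

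Suppose $f \perp_\delta^\varepsilon g$ in $X^*$. By definition (normalizing as in Definition~\ref{1st Defn}) there is $\Gamma \in X^{**}$ with $\|\Gamma\| = 1$ such that
\[
|\Gamma(f)| \geq (1-\delta)\|f\|, \qquad |\Gamma(g)| \leq \frac{\varepsilon}{1-\delta}\|g\|.
\]
Since $X$ is reflexive, the canonical embedding $\pi$ is surjective. Hence $\Gamma = \pi(x)$ for some $x \in X$, and because $\pi$ is an isometry, $\|x\| = \|\Gamma\| = 1$, so $x \in S_X$. Substituting $\Gamma(h) = h(x)$ turns the two inequalities into
\[
|f(x)| \geq (1-\delta)\|f\|, \qquad |g(x)| \leq \frac{\varepsilon}{1-\delta}\|g\|,
\]
which is the required condition.

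The degenerate cases need a brief check. If $f = 0$ or $g = 0$, the defining inequalities still make sense: the nonzero-$\Gamma$ formulation allows normalization, and every bound is homogeneous. So no separate treatment is needed beyond noting that the normalization $\|\Gamma\| = 1$ is legitimate because $\Gamma \neq 0$.

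There is no real obstacle here. The only point requiring care is that surjectivity of $\pi$, i.e. reflexivity, is used exactly once, and that the isometry property gives $x \in S_X$ rather than merely $x \in B_X$.
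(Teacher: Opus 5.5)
Your proposal is correct and matches the paper's proof. Sufficiency is Theorem~\ref{thm:dual}, and necessity writes the witnessing $\Gamma\in X^{**}$ as $\pi(x)$ using reflexivity. Your normalizing $\Gamma$ to norm one before pulling back, then invoking the isometry to get $x\in S_X$, is equivalent to the paper's normalizing the nonzero $x$ afterwards.
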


\begin{proof}
The sufficiency follows from Theorem \ref{thm:dual}. For necessity, suppose 
$f \perp_\delta^\varepsilon g$ in $X^*$, so there exists $\Gamma \in X^{**}$ 
witnessing this. Since $X$ is reflexive, the canonical embedding 
$\pi: X \to X^{**}$ is surjective, so $\Gamma = \pi(x)$ for some 
 non-zero $x \in X$, and  normalising $x$ the proof follows.
\end{proof}

\begin{remark}
Therefore, in the reflexive case, $f \perp_\delta^\varepsilon g$ in $X^*$ if and only if there exists $x \in S_X$ such that $f$ approximately attains its norm at $x$, that is, 
$|f(x)| \geq (1-\delta)\|f\|$, and $g$ approximately vanishes at $x$, i.e., 
$|g(x)| \leq \frac{\varepsilon}{1-\delta}\|g\|$.
\end{remark}

\begin{corollary}
Let $X$ be a reflexive Banach space and $f, g \in X^*$. Then $f \perp_B g$ in 
$X^*$ if and only if
\[
M_f \cap \ker g \neq \emptyset,
\]
where $M_f = \{x \in S_X : |f(x)| = \|f\|\}$ is the norm-attainment set of $f$.
\end{corollary}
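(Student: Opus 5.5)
The plan is to specialize Corollary~\ref{cor:reflexdual} to $\delta=\varepsilon=0$. This is legitimate, since the constraint $\varepsilon<(1-\delta)^2$ reads $0<1$. Recall from the Remark following Definition~\ref{1st Defn} that $\perp_0^0$ is exactly Birkhoff--James orthogonality $\perp_B$. Applied in the space $X^*$, this means $f\perp_B g$ in $X^*$ if and only if $f\perp_0^0 g$ in $X^*$.

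We may assume $f\neq 0$ and $g\neq 0$. In the degenerate cases both sides should be checked directly. If $f=0$, then $f\perp_B g$ holds trivially, and $M_f=S_X$ meets $\ker g$ whenever $\dim X\ge 2$. If $g=0$, then $\ker g=X$, and $M_f\neq\emptyset$ by reflexivity (James' theorem, or simply weak compactness of $B_X$).

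For nonzero $f,g$, Corollary~\ref{cor:reflexdual} with $\delta=\varepsilon=0$ says that $f\perp_0^0 g$ in $X^*$ if and only if there is $x\in S_X$ with $|f(x)|\ge\|f\|$ and $|g(x)|\le 0$. Since $|f(x)|\le\|f\|\|x\|=\|f\|$ always holds, the first inequality forces $|f(x)|=\|f\|$, that is, $x\in M_f$. The second inequality means $x\in\ker g$. So the condition is precisely $M_f\cap\ker g\neq\emptyset$, and the two directions of the corollary transfer verbatim.

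The only point needing care is the identification of $\perp_0^0$ with $\perp_B$. This follows from the functional characterization of Birkhoff--James orthogonality quoted in the introduction: $x\perp_B y$ iff some $\Gamma$ of norm one satisfies $\Gamma(x)=\|x\|$ and $\Gamma(y)=0$. Up to rotating $\Gamma$ by a unimodular scalar, this is exactly Definition~\ref{1st Defn} with $\delta=\varepsilon=0$. Alternatively, one can cite condition (iv) of Theorem~\ref{Thm:Equiv}, which at $\delta=\varepsilon=0$ is literally the Birkhoff--James inequality. I expect no genuine obstacle beyond this bookkeeping.
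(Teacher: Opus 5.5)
Your proposal is correct and is essentially the paper's proof: specialize Corollary~\ref{cor:reflexdual} to $\delta=\varepsilon=0$, observe that $|f(x)|\ge\|f\|$ with $x\in S_X$ forces $x\in M_f$, and that $|g(x)|\le 0$ means $x\in\ker g$. Your separate treatment of $f=0$ or $g=0$ is extra care the paper omits, and it correctly shows that for $f=0$, $g\neq 0$ and $\dim X=1$ the statement actually fails under the norm-inequality definition of $\perp_B$. One small correction: showing $M_f\neq\emptyset$ only needs weak compactness of $B_X$, not James' theorem, which is the converse direction.
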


\begin{proof}
Set $\delta = \varepsilon = 0$ in the corollary above. Then $f \perp_B g$ in 
$X^*$ if and only if there exists $x \in S_X$ such that
\[
|f(x)| = \|f\|, \qquad g(x) = 0,
\]
that is, $x \in M_f \cap \ker g$.
\end{proof}

\noindent We conclude this Section with the proof of Theorem \ref{thm:bound} which recovers  the characterization of best approximation to a point out of a closed subspace.

\begin{proposition}\label{thm:bound}
Let $Y$ be a closed proper non-trivial subspace of $X$, $x \in X \setminus \{0\}$,
and $\delta \in [0,1)$. Put
\[
\widetilde m_\delta(x,Y) := \inf\bigl\{\|f|_Y\| : f \in S_{X^*},\
|f(x)| \geq (1-\delta)\|x\|\bigr\},
\]
\[
m_\delta(x,Y) := \inf\bigl\{\|f|_Y\| : f \in B_{X^*},\
|f(x)| \geq (1-\delta)\|x\|\bigr\},
\]
and suppose $\widetilde m_\delta(x,Y) < 1-\delta$. Then we have
\begin{itemize}
    \item[(a)] $x\notin Y$, both infima are attained.
    \item[(b)] The set
\[
S := \bigl\{\varepsilon \in [0,(1-\delta)^2) : x \perp_\delta^{\varepsilon} Y\bigr\}
\]
has a least element $\varepsilon_{\min}$, and
\[
\varepsilon_{\min} = (1-\delta)\,\widetilde m_\delta(x,Y).
\]
    \item[(c)] $(1-\delta)\,m_\delta(x,Y) \leq \varepsilon_{\min} \leq m_\delta(x,Y)$, and
$x \perp_\delta^{m_\delta(x,Y)} Y$ whenever $m_\delta(x,Y)<(1-\delta)^2$.
\end{itemize}
\end{proposition}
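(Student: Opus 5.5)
We will work throughout with the reformulation supplied by Theorem~\ref{thm:subspace}. For $x\notin Y$ it says that $x\perp_\delta^\varepsilon Y$ holds if and only if some $f\in S_{X^*}$ satisfies $|f(x)|\geq(1-\delta)\|x\|$ and $\|f|_Y\|\leq \varepsilon/(1-\delta)$. Most of the statement then reduces to weak$^*$-compactness and bookkeeping.

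\emph{Part (a).} We first show $x\notin Y$. If $x\in Y$, then every admissible $f\in S_{X^*}$ satisfies $\|f|_Y\|\geq |f(x)|/\|x\|\geq 1-\delta$. Hence $\widetilde m_\delta(x,Y)\geq 1-\delta$, which contradicts the hypothesis.

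Next we show that $m_\delta$ is attained. The set $\{f\in B_{X^*}: |f(x)|\geq(1-\delta)\|x\|\}$ is weak$^*$-closed in $B_{X^*}$, hence weak$^*$-compact. The map $f\mapsto\|f|_Y\|=\sup_{y\in S_Y}|f(y)|$ is weak$^*$-lower semicontinuous, being a supremum of weak$^*$-continuous functions. So the infimum is attained.

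For $\widetilde m_\delta$ the unit sphere is not weak$^*$-closed, so we argue differently. Take a minimizing sequence $f_n\in S_{X^*}$ with a weak$^*$ cluster point $f_0\in B_{X^*}$. Then $|f_0(x)|\geq(1-\delta)\|x\|$ and $\|f_0|_Y\|\leq\widetilde m_\delta$. If $\|f_0\|<1$, we push $f_0$ back to the sphere using the perturbation trick from the proof of Theorem~\ref{Thm:Equiv}:
\begin{itemize}
\item choose $k\in S_{X^*}$ with $k|_Y=0$ and $k(x)>0$, which exists because $x\notin Y$ and $Y$ is closed;
\item consider $l_t=\overline{\mathrm{sgn} f_0(x)}f_0+tk$ for $t\geq0$;
\item observe that $\|l_t\|$ depends continuously on $t$, equals $\|f_0\|<1$ at $t=0$, and tends to $\infty$, so some $t$ gives $\|l_t\|=1$;
\item for that $t$, $l_t|_Y=\overline{\mathrm{sgn}}\,f_0|_Y$ and $|l_t(x)|=|f_0(x)|+tk(x)\geq(1-\delta)\|x\|$.
\end{itemize}
Thus $l_t$ is admissible and attains $\widetilde m_\delta$.

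\emph{Part (b).} We normalize $\|x\|=1$. By Theorem~\ref{thm:subspace}, $\varepsilon\in S$ if and only if $\varepsilon\geq(1-\delta)\widetilde m_\delta$ (and $\varepsilon<(1-\delta)^2$). The "if" direction uses the minimizer from (a); the "only if" direction holds because a witness $f$ gives $\widetilde m_\delta\leq\|f|_Y\|\leq\varepsilon/(1-\delta)$. The hypothesis $\widetilde m_\delta<1-\delta$ gives $(1-\delta)\widetilde m_\delta<(1-\delta)^2$, so this value lies in the allowed range. Therefore $\varepsilon_{\min}=(1-\delta)\widetilde m_\delta$.

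\emph{Part (c).} Since $S_{X^*}\subset B_{X^*}$, we have $m_\delta\leq\widetilde m_\delta$. Multiplying by $1-\delta$ gives the left inequality $(1-\delta)m_\delta\leq\varepsilon_{\min}$.

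The right inequality $\widetilde m_\delta\leq m_\delta/(1-\delta)$ is the main obstacle, since it requires passing from the ball to the sphere with controlled loss. Let $g\in B_{X^*}$ attain $m_\delta$. If $\|g\|=1$, we are done, since then $\widetilde m_\delta\leq m_\delta$. Otherwise, $\|g\|\geq|g(x)|\geq 1-\delta$. We would use the perturbation $l_t$ above to reach norm one; this keeps the restriction to $Y$ unchanged and gives $\widetilde m_\delta\leq\|g|_Y\|=m_\delta$. In fact this shows $\widetilde m_\delta=m_\delta$, which yields the stated bounds with room to spare. As a fallback we can use $g/\|g\|$, which gives $\|g|_Y\|/\|g\|\leq m_\delta/(1-\delta)$ and still suffices.

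The final claim follows from part (b): if $m_\delta<(1-\delta)^2$, then $m_\delta\geq\varepsilon_{\min}$ lies in the allowed range, so $m_\delta\in S$ by monotonicity of the relation in $\varepsilon$.
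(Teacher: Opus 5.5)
Your proof is correct, but it takes a different route from the paper on the two points that carry real content.

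\textbf{Attainment of $\widetilde m_\delta$ and of $\min S$.} The paper never proves directly that $\widetilde m_\delta$ is attained. It first shows $\inf S=\varepsilon_*:=(1-\delta)\widetilde m_\delta$, using only the definition of the infimum. It then lets $\varepsilon_n\searrow\varepsilon_*$ in the norm inequality (iv) of Theorem~\ref{Thm:Equiv}, which is visibly closed in $\varepsilon$, to get $\varepsilon_*\in S$. Only then does it apply Theorem~\ref{thm:subspace} at $\varepsilon_*$ to obtain a norm-one functional realizing $\widetilde m_\delta$. So both attainments come out together, and the passage from ball to sphere stays hidden inside the proof of Theorem~\ref{thm:subspace}.

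You instead prove attainment of $\widetilde m_\delta$ up front, by weak$^*$-compactness plus the perturbation $l_t=\overline{\mathrm{sgn} f_0(x)}f_0+tk$ and the intermediate value theorem. After that, (b) is bookkeeping: $S=[(1-\delta)\widetilde m_\delta,(1-\delta)^2)$. You never need the limiting argument through (iv).

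\textbf{Part (c).} The paper normalizes $f_0/\|f_0\|$, which loses the factor $1-\delta$. Your perturbation leaves $f|_Y$ unchanged, so it gives the sharper identity $\widetilde m_\delta(x,Y)=m_\delta(x,Y)$ for every $x\notin Y$. Hence $\varepsilon_{\min}=(1-\delta)m_\delta(x,Y)$, and the left inequality in (c) is in fact an equality. This identity also makes your minimizing-sequence step unnecessary: perturbing a minimizer of $m_\delta$ onto $S_{X^*}$ already gives a minimizer of $\widetilde m_\delta$.

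\textbf{What each route buys.} The paper's route shows off the norm-inequality characterization. Yours is more self-contained, needing only weak$^*$-compactness, one Hahn--Banach functional $k$, and the direction (a)$\Rightarrow$(b) of Theorem~\ref{thm:subspace}. It also gives a stronger conclusion than the one stated.
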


\begin{proof}
Define $\mathscr{N}_\delta(x) := \{ f \in B_{X^*} : |f(x)| \geq (1-\delta)\|x\| \}$.
This set is a weak$^*$-closed subset of $B_{X^*}$, hence weak$^*$-compact by the
Banach--Alaoglu theorem and it is non-empty by the Hahn--Banach
theorem. Since the map $f \mapsto \|f|_Y\|$ is
weak$^*$-lower semicontinuous, it attains its infimum on $\mathscr{N}_\delta(x)$,
so $m_\delta(x,Y)$ is well-defined and the minimum is attained.

\medskip

\noindent The hypothesis forces $x\notin Y$. Indeed, if $x\in Y$, then every $f\in S_{X^*}$ with $|f(x)|\geq(1-\delta)\|x\|$ satisfies $\|f|_Y\|\,\|x\|\geq |f(x)|\geq (1-\delta)\|x\|$, proving $\widetilde m_\delta(x,Y)\geq 1-\delta$, a contradiction.

\medskip

\noindent Let $\varepsilon_* := (1-\delta)\,\widetilde m_\delta(x,Y)$,
so that $\varepsilon_*<(1-\delta)^2$ by hypothesis. Since $x\notin Y$,
Theorem~\ref{thm:subspace} gives, for every $\varepsilon\in[0,(1-\delta)^2)$,
\[
x \perp_\delta^{\varepsilon} Y \iff \exists\, f\in S_{X^*}:~ |f(x)|\geq (1-\delta)\|x\|,~
\|f|_Y\|\leq \tfrac{\varepsilon}{1-\delta}.
\]
If $\varepsilon>\varepsilon_*$ then $\widetilde m_\delta(x,Y)<\varepsilon/(1-\delta)$, so
such an $f$ exists and $\varepsilon\in S$; conversely, if $\varepsilon\in S$ then by Theorem \ref{thm:subspace}, $\widetilde m_\delta(x,Y)\leq \varepsilon/(1-\delta)$, that is
$\varepsilon\geq\varepsilon_*$. Hence $S\neq\emptyset$ and $\inf S=\varepsilon_*$.

\medskip

\noindent To see that the infimum is attained, take any sequence
$\varepsilon_n \searrow \varepsilon_*$
with each $\varepsilon_n \in S$. By the equivalence
$(i)\Leftrightarrow(iv)$ of Theorem~\ref{Thm:Equiv}, for every scalar $\lambda$
and every $y \in Y$,
\[
\|x + \lambda y\|
\geq (1-\delta)\|x\| - \frac{\varepsilon_n}{1-\delta}|\lambda|\|y\|
\xrightarrow{n \to \infty}
(1-\delta)\|x\| - \frac{\varepsilon_*}{1-\delta}|\lambda|\|y\|,
\]
so condition $(iv)$ holds at $\varepsilon_*$ for every $y \in Y$, and by
$(iv)\Rightarrow(i)$ of Theorem~\ref{Thm:Equiv} we conclude
$x \perp_\delta^{\varepsilon_*} Y$. Therefore, $\varepsilon_{\min}=\varepsilon_*=(1-\delta)\widetilde m_\delta(x,Y)$, and there exists $h\in S_{X^*}$ with
$|h(x)|\geq(1-\delta)\|x\|$ and $\|h|_Y\|\leq \varepsilon_{\min}/(1-\delta)=\widetilde m_\delta(x,Y)$. This also shows
\[
\widetilde m_\delta(x,Y) := \min\bigl\{\|f|_Y\| : f \in S_{X^*},\
|f(x)| \geq (1-\delta)\|x\|\bigr\},
\]

\medskip

\noindent Evidently, $m_\delta(x,Y)\leq \widetilde m_\delta(x,Y)$, since $S_{X^*}\subset B_{X^*}$. Conversely, let $f_0 \in \mathscr{N}_\delta(x)$ attain $m_\delta(x,Y)$. From
$\|f_0\|\,\|x\|\geq|f_0(x)| \geq (1-\delta)\|x\|$ we get $\|f_0\| \geq 1-\delta > 0$, so
$g_0 := f_0/\|f_0\| \in S_{X^*}$ satisfies
\[
|g_0(x)| = \frac{|f_0(x)|}{\|f_0\|} \geq (1-\delta)\|x\|,\qquad
\|g_0|_Y\| = \frac{\|f_0|_Y\|}{\|f_0\|}\leq \frac{m_\delta(x,Y)}{1-\delta},
\]
proving $\widetilde m_\delta(x,Y)\leq m_\delta(x,Y)/(1-\delta)$. Multiplying the chain
$m_\delta(x,Y)\leq \widetilde m_\delta(x,Y)\leq m_\delta(x,Y)/(1-\delta)$ by $(1-\delta)$
gives
\[
(1-\delta)\,m_\delta(x,Y) \leq \varepsilon_{\min} \leq m_\delta(x,Y).
\]
If in addition $m_\delta(x,Y)<(1-\delta)^2$, then $m_\delta(x,Y)\in S$ by monotonicity, so
$x \perp_\delta^{m_\delta(x,Y)} Y$. This completes the proof.
\end{proof}

\begin{corollary}
Let $X$ be a normed linear space, $Y$ a closed proper non-trivial subspace 
of $X$, and $x \in X \setminus Y$. Then $y_0 \in Y$ is a best approximation 
to $x$ from $Y$ if and only if
\[
m_0(x-y_0, Y) := \min\left\{\|f|_Y\| : f \in S_{X^*},\ 
f(x - y_0) = \|x - y_0\|\right\} = 0.
\]
\end{corollary}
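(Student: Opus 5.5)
The plan is to reduce the statement to the classical characterization of best approximation through Birkhoff--James orthogonality, and then to read that characterization off from the subspace version already proved. Recall that $y_0\in Y$ is a best approximation to $x$ from $Y$ if and only if $\|x-y_0+w\|\geq\|x-y_0\|$ for all $w\in Y$. Since $Y$ is a subspace, this is exactly $x-y_0\perp_B y$ for every $y\in Y$, that is, $x-y_0\perp_0^0 Y$. Put $u=x-y_0$. Because $x\notin Y$, we have $u\notin Y$ and in particular $u\neq0$, so Theorem~\ref{thm:subspace} with $\delta=\varepsilon=0$ applies to the pair $(u,Y)$.

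For the direction $(\Rightarrow)$, Theorem~\ref{thm:subspace} provides $f\in S_{X^*}$ with $|f(u)|\geq\|u\|$ and $\|f|_Y\|=0$. Then $|f(u)|=\|u\|$, and replacing $f$ by $\overline{\mathrm{sgn}(f(u))}f$ gives $f(u)=\|u\|$ while $f|_Y=0$ is unchanged. This $f$ lies in the set over which the minimum is taken, so the infimum is $0$ and it is attained.

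For the direction $(\Leftarrow)$, suppose the minimum equals $0$ and is attained at some $f$. Then $f\in S_{X^*}$, $f(u)=\|u\|$ and $f|_Y=0$. For every $w\in Y$ this gives $\|u+w\|\geq|f(u+w)|=\|u\|$, which is exactly best approximation.

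The one step that needs care is the meaning of ``$\min$'': the set $\{f\in S_{X^*}: f(u)=\|u\|\}$ is nonempty by Hahn--Banach, but the minimum must actually be attained. I would handle this as Proposition~\ref{thm:bound} does, with $\delta=0$: the set is weak$^*$-closed, and $f\mapsto\|f|_Y\|$ is weak$^*$-lower semicontinuous. Restricting to $\{f\in B_{X^*}: f(u)=\|u\|\}$ automatically forces $\|f\|=1$, so this set coincides with the one in the statement and is weak$^*$-compact. Hence the minimum exists, and the vanishing of the minimum is equivalent to the existence of the functional $f$ used in both directions above.
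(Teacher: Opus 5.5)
Your proposal is correct and follows the paper's route: you reduce best approximation to $x-y_0\perp_B y$ for all $y\in Y$ and then invoke Theorem~\ref{thm:subspace} with $\delta=\varepsilon=0$. You also make explicit two points the paper leaves implicit: the unimodular rotation that turns $|f(u)|\geq\|u\|$ into $f(u)=\|u\|$, and the weak$^*$-compactness argument showing that the minimum defining $m_0(x-y_0,Y)$ is actually attained.
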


\begin{proof}
By the classical characterization of best approximations, $y_0 \in Y$ is 
a best approximation to $x$ from $Y$ if and only if
\[
x - y_0 \perp_B y \quad \text{for every } y \in Y.
\]
Setting $\delta = \varepsilon = 0$ in Theorem~\ref{thm:subspace} applied 
to $x - y_0 \in X \setminus Y$, this holds if and only if there exists 
$f \in S_{X^*}$ such that $f(x-y_0) = \|x-y_0\|$ and $f|_Y \equiv 0$, 
which is precisely the condition $m_0(x-y_0, Y) = 0$.
\end{proof}

\section{Application to continuous function spaces}\label{sec:ck}

\noindent In this Section we provide some applications of the theory of norming approximate orthogonality developed so far, in case of continuous function spaces. Given any continuous linear functional $f$ on a normed linear space $X$, we consider the $\delta$-norming set and absolute $\delta$-norming set of $f$ as 
\[
M_\delta(f) = \{x\in S_X:~|f(x)|\geq (1-\delta)\|f\|\},~A_\delta(f) = \{x\in S_X:~f(x)\geq (1-\delta)\|f\|\}
\]
for  $\delta\in[0,1)$. We note that $M_0(f)$ is the exact norm attainment set of $f$, which can be empty in the non-reflexive case, however, $M_\delta(f)\neq \emptyset$ in any normed linear space provided $\delta > 0$. We have the following characterization of approximate norming orthogonality in terms of approximate norm attainment set which is a reformulation of Definition \ref{1st Defn}. 

\begin{proposition}\label{prop:intersection}
Let $X$ be a normed linear space and let $\delta,\varepsilon\in [0,1)$ with $\varepsilon < (1-\delta)^2$.  Let $x,y\in X$ be non-zero. Then $x\nperp y$ if and only if
\begin{align}\label{Intersection}
\left\{f(y):~f\in S_{X^*},~\frac{x}{\|x\|}\in A_\delta(f)\right\}\cap B\left(0,\frac{\varepsilon}{(1-\delta)}\|y\|\right) \neq \emptyset.
\end{align}
\end{proposition}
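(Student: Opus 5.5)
The proposition is essentially an unwinding of the definitions; the only substance is the equivalence (i)$\iff$(ii) of Theorem~\ref{Thm:Equiv}, which lets us insist that the witnessing functional takes a nonnegative real value at $x$. Condition \eqref{Intersection} says there is some $f\in S_{X^*}$ with
\[
f\!\left(\tfrac{x}{\|x\|}\right)\geq (1-\delta)\|f\|=(1-\delta)
\qquad\text{and}\qquad
|f(y)|\leq \tfrac{\varepsilon}{1-\delta}\|y\|.
\]
Here the inequality $f(x/\|x\|)\geq(1-\delta)$ is understood, as in the definition of $A_\delta(f)$, to mean that $f(x)$ is real. Since $\|x/\|x\|\|=1$, the vector $x/\|x\|$ lies in $S_X$, as membership in $A_\delta(f)$ requires.

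For the forward implication, assume $x\nperp y$. By Theorem~\ref{Thm:Equiv}(ii) there is $f\in X^*$ with $f(x)$ real and
\[
f(x)\geq (1-\delta)\|f\|\|x\|, \qquad |f(y)|\leq \tfrac{\varepsilon}{1-\delta}\|f\|\|y\|.
\]
Because $x\neq0$ and $1-\delta>0$, we get $f(x)>0$, so $f\neq 0$. Normalise to $g=f/\|f\|\in S_{X^*}$. Dividing by $\|f\|\|x\|$ gives
\[
g\!\left(\tfrac{x}{\|x\|}\right)\geq 1-\delta=(1-\delta)\|g\|,
\]
so $x/\|x\|\in A_\delta(g)$. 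Dividing by $\|f\|$ gives $|g(y)|\leq\frac{\varepsilon}{1-\delta}\|y\|$, so $g(y)\in B\bigl(0,\frac{\varepsilon}{1-\delta}\|y\|\bigr)$. Thus $g(y)$ lies in the intersection, which is therefore nonempty.

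For the converse, suppose $f\in S_{X^*}$ has $x/\|x\|\in A_\delta(f)$ and $f(y)$ in the disc. Multiplying the first condition by $\|x\|$ gives
\[
|f(x)|\geq f(x)\geq (1-\delta)\|x\|=(1-\delta)\|f\|\|x\|,
\]
and the disc condition reads $|f(y)|\leq\frac{\varepsilon}{1-\delta}\|f\|\|y\|$. These are exactly the requirements of Definition~\ref{1st Defn}, so $x\nperp y$.

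The only step needing care is the forward implication. Definition~\ref{1st Defn} only gives $|f(x)|$ large, whereas $A_\delta$ requires $f(x)$ itself to be real and large. This is handled by invoking (i)$\Rightarrow$(ii) of Theorem~\ref{Thm:Equiv}, which amounts to rotating $f$ by $\overline{\mathrm{sgn}(f(x))}$; the rotation leaves both $\|f\|$ and $|f(y)|$ unchanged. Everything else is homogeneity.
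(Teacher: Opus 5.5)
Your proof is correct and matches the paper, which gives no separate argument and treats the proposition as a direct reformulation of Definition~\ref{1st Defn}: normalise the witnessing functional and rotate it by $\overline{\mathrm{sgn}(f(x))}$. One small slip: the inference ``$f(x)>0$, so $f\neq 0$'' is circular, because $f(x)>0$ already needs $\|f\|>0$ (and $f=0$ satisfies (ii) as literally written), so instead take the nonzero witness from Definition~\ref{1st Defn} and rotate it directly, as your last paragraph already describes.
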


\noindent The above reformulation, although elementary, has deeper implications in case of $\delta=0$ through the Krein-Milman Theorem. However, first we formulate norming approximate orthogonality in the continuous function space $C(K,X)$ endowed with the supremum norm, where $K$ is a compact Hausdorff space and $X$ is a normed linear space. For any $\phi\in C(K,X)$, we define the norm attainment of $\phi$ as
\[
M_\phi = \{t\in K:~\|\phi(t)\|=\|\phi\|_\infty\}
\]
which is always non-empty, since $K$ is compact. The dual $(C(K,X))^*$ of $C(K,X)$ can be identified isometrically with $rcabv(K,X^*)$, the space of regular countably additive $X^*$-valued Borel measures 
of bounded variation on $K$ with the dual pairing:
\[
m(\phi) = \int_K \phi \, dm, \quad \phi \in C(K,X),\; m \in rcabv(K,X^*),
\]
with $\|m\| = |m|(K)$. Thus, Proposition \ref{prop:intersection} takes the following from in $C(K,X)$:

\begin{proposition}\label{Prop:Intersection}
Let $\phi, \psi \in C(K,X)$  be non-zero and $\delta,\varepsilon\in [0,1)$ with $\varepsilon < (1-\delta)^2$. Then $\phi\nperp \psi$ if and only if
\[
\left\{ \int_K \psi \, dm:~  m\in S_{(C(K,X))^*},~ \int_K \phi \, dm\geq (1-\delta)\|\phi\|_\infty\right\} \cap B\left(0,\frac{\varepsilon}{(1-\delta)}\|\psi\|\right) \neq \emptyset.
\]
\end{proposition}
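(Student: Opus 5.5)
This is Proposition~\ref{prop:intersection} transported to $C(K,X)$ through the Riesz--Markov identification. No new analysis is needed; the work is careful bookkeeping of the isometric duality. The plan has three steps.

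\textbf{Step 1: reformulate $\phi \nperp \psi$.} By the equivalence (i)$\Leftrightarrow$(ii) of Theorem~\ref{Thm:Equiv}, together with normalisation of the functional as in Definition~\ref{1st Defn}, $\phi\nperp\psi$ holds if and only if there is $\Lambda\in S_{(C(K,X))^*}$ such that
\[
\Lambda(\phi)\ \text{is real},\qquad \Lambda(\phi)\geq(1-\delta)\|\phi\|_\infty,\qquad |\Lambda(\psi)|\leq\frac{\varepsilon}{1-\delta}\|\psi\|.
\]
This is exactly Proposition~\ref{prop:intersection}, since $\phi/\|\phi\|_\infty\in A_\delta(\Lambda)$ means $\Lambda(\phi)\geq(1-\delta)\|\phi\|_\infty$. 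Here the inequality is understood to include that the left side is real.

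\textbf{Step 2: pass to measures.} Invoke the isometric isomorphism $(C(K,X))^*\cong \mathrm{rcabv}(K,X^*)$ with pairing $m(\phi)=\int_K\phi\,dm$ and $\|m\|=|m|(K)$. Because it is an isometry onto, $\Lambda$ ranges over $S_{(C(K,X))^*}$ exactly when the representing $m$ ranges over measures of total variation $1$. Moreover $\Lambda(\phi)=\int_K\phi\,dm$ and $\Lambda(\psi)=\int_K\psi\,dm$. Substituting these into Step 1 shows that the existence of $\Lambda$ is equivalent to the existence of $m\in S_{(C(K,X))^*}$ with $\int_K\phi\,dm\geq(1-\delta)\|\phi\|_\infty$ and $\int_K\psi\,dm\in B\bigl(0,\frac{\varepsilon}{1-\delta}\|\psi\|\bigr)$. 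That is precisely the statement that the displayed intersection is non-empty.

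\textbf{Step 3: check both directions.} Each direction is a direct translation. For necessity, the measure representing the witness $\Lambda$ produces an element of the intersection. For sufficiency, a measure $m$ in the intersection defines a functional $\Lambda$ witnessing (ii) of Theorem~\ref{Thm:Equiv}.

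The only point requiring care is the one I would flag as the obstacle, although it is mild. The condition ``$\int_K\phi\,dm\geq(1-\delta)\|\phi\|_\infty$'' implicitly requires the integral to be real. We must justify that restricting to such $m$ loses nothing compared with the modulus condition in Definition~\ref{1st Defn}. This follows by multiplying $m$ by the unimodular scalar $\overline{\mathrm{sgn}(\int_K\phi\,dm)}$. Doing so preserves the total variation norm and the modulus $|\int_K\psi\,dm|$, which is exactly the argument behind (i)$\Rightarrow$(ii) in Theorem~\ref{Thm:Equiv}.
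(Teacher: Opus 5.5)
Your proposal is correct and matches the paper. The paper presents this proposition simply as Proposition~\ref{prop:intersection} transcribed through the isometric identification $(C(K,X))^*\cong\mathrm{rcabv}(K,X^*)$, with $m(\phi)=\int_K\phi\,dm$ and $\|m\|=|m|(K)$; your explicit unimodular rotation of $m$ to make $\int_K\phi\,dm$ real, as in (i)$\Leftrightarrow$(ii) of Theorem~\ref{Thm:Equiv}, is precisely the bookkeeping the paper leaves implicit.
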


\medskip

\noindent For $t \in K$, we consider the Dirac measure $\delta_t$ on $\mathcal{B}(K)$, the $\sigma$-algebra of all Borel subsets of $K$, by
\[
\delta_t(A)=\begin{cases}
1, & t\in A\\
0, & t\notin A.
\end{cases}
\]
For any $t\in K$ and $x^* \in X^*$, we consider the measure $\delta_t\otimes x^*$ as a linear functional on $C(K,X)$, defined by
\[
x^* \otimes \delta_t : C(K,X) \to \mathbb{C}, 
\quad (x^* \otimes \delta_t)(\phi) = x^*(\phi(t)).
\]
Then for every $\phi \in C(K,X)$,
\[
\int_K \phi \, d(x^* \otimes \delta_t) = x^*(\phi(t)).
\]

\medskip

\noindent Let $\phi\in C(K,X)$ with $\|\phi\|_\infty=1$. Consider the collection of norm one measures
\[
F_0=\{m\in S_{(C(K,X))^*}:~m(\phi)=\int_K \phi \, dm=1\}.
\]
The collection $F_0$ is a face of $B_{(C(K,X))^*}$ and is weak$^*$-closed. Therefore, by the Krein--Milman theorem, $F_0$ is the weak$^*$-closed convex hull of its extreme points. It follows from \cite{Bro, ruess} that 
\[
\mathrm{ext}(B_{(C(K,X))^*})=\{x^*\otimes \delta_t:~x^*\in \mathrm{ext}(B_{X^*}),~t\in K\}.
\]
Since $\mathrm{ext}(F_0)\subset \mathrm{ext}(B_{(C(K,X))^*})$, and $x^*\otimes \delta_t\in F_0$ if and only if $x^*(\phi(t))=\|\phi\|_\infty$, which together with $\|x^*\|=1$ forces $t\in M_\phi$, we have
\begin{align*}
F_0 
&= \overline{\mathrm{conv}}^{w^*}\{x^*\otimes \delta_t:~x^*\in \mathrm{ext}(B_{X^*}),~t\in K,~x^*(\phi(t))=\|\phi\|_\infty\}\\
&= \overline{\mathrm{conv}}^{w^*}\{x^*\otimes \delta_t:~x^*\in \mathrm{ext}(B_{X^*}),~t\in M_\phi,~x^*(\phi(t))=\|\phi\|_\infty\}\\
&= \overline{\mathrm{conv}}^{w^*}\{x^*\otimes \delta_t:~x^*\in S_{X^*},~t\in M_\phi,~x^*(\phi(t))=\|\phi\|_\infty\}.
\end{align*}
The first two equalities follow from the above observation. For the third equality, the inclusion $\subset$ is clear. For $\supset$, fix $t\in M_\phi$ and $x^*\in S_{X^*}$ with $x^*(\phi(t))=\|\phi\|_\infty$. Then $x^*$ belongs to the weak$^*$-closed face $\{y^*\in B_{X^*}:~y^*(\phi(t))=\|\phi\|_\infty\}$ of $B_{X^*}$. Hence, by the Krein--Milman theorem,
\[
x^*\in\overline{\mathrm{conv}}^{w^*}\{y^*\in \mathrm{ext}(B_{X^*}):~y^*(\phi(t))=\|\phi\|_\infty\}.
\]
Applying the weak$^*$-continuous linear map $y^*\mapsto y^*\otimes \delta_t$, we obtain
\[
x^*\otimes \delta_t\in\overline{\mathrm{conv}}^{w^*}\{y^*\otimes \delta_t:~y^*\in \mathrm{ext}(B_{X^*}),~y^*(\phi(t))=\|\phi\|_\infty\}.
\]
Taking weak$^*$-closed convex hulls over all such $t$ and $x^*$ gives the reverse inclusion. Therefore, with $\delta=0$, we get the following result, as a corollary to Proposition \ref{Prop:Intersection}.

\begin{corollary}\label{Cor:CKX}
Let $\phi, \psi \in C(K,X)$ and $\varepsilon\in  [0,1)$. Then $\phi\perp^{\varepsilon}_0 \psi$ if and only if
\[
\mathrm{conv}\{x^*(\psi(t)):\ t\in M_\phi,\ x^*\in S_{X^*},\ x^*(\phi(t))=\|\phi\|_\infty\}
\cap B(0,\varepsilon\|\psi\|)\neq\emptyset.
\]
\end{corollary}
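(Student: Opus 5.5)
We read the statement with $\delta=0$, so $\phi\perp_0^\varepsilon\psi$ is Chmieli\'nski's $\phi\perp_B^\varepsilon\psi$. By homogeneity (both sides scale correctly in $\phi$ and $\psi$) we normalise $\|\phi\|_\infty=1$; the case $\psi=0$ is trivial. Write
\[
E=\{x^*(\psi(t)):\ t\in M_\phi,\ x^*\in S_{X^*},\ x^*(\phi(t))=1\}.
\]
By Proposition~\ref{Prop:Intersection} with $\delta=0$, $\phi\perp_0^\varepsilon\psi$ holds if and only if the set $\{m(\psi): m\in F_0\}$ meets $B(0,\varepsilon\|\psi\|)$. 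Here the condition $m(\phi)\ge \|\phi\|_\infty$ with $\|m\|=1$ forces $m(\phi)=1$, so the relevant measures are exactly those in $F_0$.

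The core step is to identify $\{m(\psi):m\in F_0\}$ with a hull of $E$. Let $G=\{x^*\otimes\delta_t: t\in M_\phi,\ x^*\in S_{X^*},\ x^*(\phi(t))=1\}$. We showed above that $F_0=\overline{\mathrm{conv}}^{w^*}G$. The evaluation map $m\mapsto m(\psi)$ is weak$^*$-continuous and affine, so it maps $\mathrm{conv}\,G$ onto $\mathrm{conv}\,E$. Since $F_0$ is weak$^*$-compact, its image is compact and convex. It contains $\mathrm{conv}\,E$ and lies in the closure of $\mathrm{conv}\,E$. Hence
\[
\{m(\psi):m\in F_0\}=\overline{\mathrm{conv}}\,E.
\]

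The main obstacle is removing the closure, because the statement uses $\mathrm{conv}$ and not $\overline{\mathrm{conv}}$. The first attempt is to show that $E$ is compact in $\mathbb{C}$; then $\mathrm{conv}\,E$ is compact by Carath\'eodory's theorem in $\mathbb{R}^2$, and the closure is harmless.

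\begin{itemize}
\item[] \emph{Boundedness.} $E\subset B(0,\|\psi\|)$.
\item[] \emph{Closedness.} Take $x_n^*(\psi(t_n))\to w$. Pass to a subnet with $t_\alpha\to t\in M_\phi$ (using compactness of $M_\phi$) and $x_\alpha^*\to x^*$ weak$^*$ in $B_{X^*}$.
\item[] \emph{Convergence of the values.} Write $x_\alpha^*(\psi(t_\alpha))=x_\alpha^*(\psi(t))+x_\alpha^*(\psi(t_\alpha)-\psi(t))$. The second term is at most $\|\psi(t_\alpha)-\psi(t)\|\to0$. The first converges to $x^*(\psi(t))$. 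The same argument gives $x^*(\phi(t))=1$.
\item[] \emph{Limit lies in $E$.} Since $\|\phi(t)\|=1$, we get $\|x^*\|=1$. So $w=x^*(\psi(t))\in E$.
\end{itemize}

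Thus $E$ is compact, and $\overline{\mathrm{conv}}\,E=\mathrm{conv}\,E$.

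Combining these steps, $\phi\perp_0^\varepsilon\psi$ holds if and only if $\mathrm{conv}\,E\cap B(0,\varepsilon\|\psi\|)\ne\emptyset$. If $X$ is merely normed, weak$^*$ compactness of $B_{X^*}$ still holds by Banach--Alaoglu, so the argument does not require completeness.
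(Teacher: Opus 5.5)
Your proposal is correct and follows essentially the same route as the paper. Both arguments use the Krein--Milman description of $F_0$, push it forward under the weak$^*$-continuous map $m\mapsto m(\psi)$ to obtain $\overline{\mathrm{conv}}\,E$, and then remove the closure using compactness of $E$ and Carath\'eodory's theorem. Your subnet argument is an unpacked version of the paper's claim that $(t,x^*)\mapsto x^*(\psi(t))$ is jointly continuous on the compact set $\{(t,x^*)\in K\times B_{X^*}:\ x^*(\phi(t))=\|\phi\|_\infty\}$. You are also somewhat more careful than the paper in checking that the image of $F_0$ is exactly $\overline{\mathrm{conv}}\,E$ and that the limit functional has norm one.
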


\begin{proof}
In the case $\delta=0$, we have
\[
F_0=\overline{\mathrm{conv}}^{w^*}\{x^*\otimes \delta_t:\ t\in M_\phi,\ x^*\in S_{X^*},\ x^*(\phi(t))=\|\phi\|_\infty\}.
\]
Applying the weak$^*$-continuous affine map $m\mapsto \int_K \psi\,dm$, it follows that
\[
\left\{\int_K \psi\,dm:\ m\in F_0\right\}
=
\overline{\mathrm{conv}}\{x^*(\psi(t)):\ t\in M_\phi,\ x^*\in S_{X^*},\ x^*(\phi(t))=\|\phi\|_\infty\}.
\]
To remove the closure, consider the set
\[
E=\{(t,x^*)\in K\times B_{X^*}:\ x^*(\phi(t))=\|\phi\|_\infty\}.
\]
Since $K$ is compact and $B_{X^*}$ is weak$^*$-compact, the product $K\times B_{X^*}$ is compact. Moreover, the map $(t,x^*)\mapsto x^*(\phi(t))$ is continuous (norm in $t$, weak$^*$ in $x^*$), so $E$ is closed and hence compact. The map $(t,x^*)\mapsto x^*(\psi(t))$ is also continuous, and therefore the set
\[
\{x^*(\psi(t)):\ t\in M_\phi,\ x^*\in S_{X^*},\ x^*(\phi(t))=\|\phi\|_\infty\}
\]
is a compact subset of $\mathbb{C}$. Consequently, by Carath\'{e}odory's Theorem on convex hull, its convex hull is also a compact subset of $\mathbb{C}$, and in particular closed, so the closure may be omitted. The conclusion now follows from  Proposition~\ref{Prop:Intersection}.
\end{proof}
\noindent The above corollary gives a shorter proof of \cite[Theorem 3.2]{roy1} (for $\varepsilon=0$) and an independent proof of \cite[Theorem 4.4]{roy1} and extends them by removing the finite dimensionality of $X$. Moreover, particular case of this corollary extends \cite[Theorem 4.4]{roy1} for compact operators, as we illustrate below. Let $X$ be a reflexive Banach space. Then $B_X$ equipped with the weak topology is a compact Hausdorff space, which we denote $B_X^w$. Since every member of $K(X)$, the space of all compact operators are weak to strong continuous, we may identify $K(X)$ inside $C(B_X^w, X)$ through the isometric embedding
\[
K(X)\hookrightarrow C(B_X^w,X),\qquad T\mapsto T|_{B_X},
\]
since $\|T\|=\sup_{x\in B_X}\|Tx\|=\|T|_{B_X}\|_\infty$. For $T\in K(X)$, define
\[
M_T=\{x\in B_X:\|Tx\|=\|T\|\}.
\]
Since $x\mapsto \|Tx\|$ is weakly continuous (by complete continuity) and $B_X^w$ 
is compact, the supremum is attained, so $M_T\neq\emptyset$.

\begin{corollary}
Let $X$ be a reflexive Banach space, $T,A\in K(X)$, and $\varepsilon\in[0,1)$. 
Then $T\zperp A$ if and only if
\[
\operatorname{conv}\{y^*(Ax):x\in M_T,\;y^*\in J(Tx)\}
\cap B(0,\varepsilon\|A\|)\neq\emptyset.
\]
\end{corollary}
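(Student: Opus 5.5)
The plan is to deduce the statement from Corollary~\ref{Cor:CKX} via the isometric embedding $K(X)\hookrightarrow C(B_X^w,X)$, $T\mapsto T|_{B_X}$, with $K=B_X^w$. First I would check that $T\zperp A$ in $K(X)$ is equivalent to $T|_{B_X}\perp_0^\varepsilon A|_{B_X}$ in $C(B_X^w,X)$. By the remark following Definition~\ref{1st Defn}, $\zperp$ coincides with $\perp_0^\varepsilon$. By Theorem~\ref{Thm:Equiv}(iv) with $\delta=0$, $\perp_0^\varepsilon$ is characterized by the norm inequality
\[
\|T+\lambda A\|\geq \|T\|-\varepsilon|\lambda|\|A\| \qquad \text{for all scalars } \lambda,
\]
which involves only norms of elements of $\operatorname{span}\{T,A\}$. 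Since the embedding is a linear isometry, this inequality holds in $K(X)$ if and only if it holds for the restrictions in $C(B_X^w,X)$. The degenerate cases $T=0$ or $A=0$ I would treat directly: for $T=0$ both sides hold trivially, and for $A=0$ both sides hold since the convex hull is the nonempty set $\{0\}$, as $M_T\neq\emptyset$ and $J(Tx)\neq\emptyset$.

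Next I would apply Corollary~\ref{Cor:CKX} with $\phi=T|_{B_X}$ and $\psi=A|_{B_X}$. This gives $\|\phi\|_\infty=\|T\|$ and $\|\psi\|=\|A\|$, and the norm attainment set is $M_\phi=\{x\in B_X:\|Tx\|=\|T\|\}=M_T$. The corollary thus yields the condition
\[
\operatorname{conv}\{x^*(Ax):\ x\in M_T,\ x^*\in S_{X^*},\ x^*(Tx)=\|T\|\}\cap B(0,\varepsilon\|A\|)\neq\emptyset .
\]
Here $B_X^w$ is compact Hausdorff because $X$ is reflexive, and $\phi$ and $\psi$ are continuous on it because compact operators on a reflexive space are weak-to-norm continuous on bounded sets.

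The only remaining step is to identify the index set: for $x\in M_T$, I need
\[
\{x^*\in S_{X^*}:x^*(Tx)=\|T\|\}=J(Tx).
\]
This holds because $\|Tx\|=\|T\|$ for such $x$, so the condition $x^*(Tx)=\|T\|$ is exactly $x^*(Tx)=\|Tx\|$. This requires $T\neq0$ so that $Tx\neq0$, which the degenerate-case handling above already covers. With that, the two sets coincide and the statement follows.

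I expect the main obstacle to be the verification that the ambient-space change is legitimate. In particular, the $\perp_0^\varepsilon$ relation is defined through functionals, and functionals on $K(X)$ differ from those on $C(B_X^w,X)$. The point of routing through Theorem~\ref{Thm:Equiv}(iv) is precisely that this norm characterization is intrinsic to $\operatorname{span}\{T,A\}$, which sidesteps the difference in dual spaces.
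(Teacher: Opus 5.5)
Your proposal is correct and follows the paper's proof essentially step for step. Like the paper, you apply Corollary~\ref{Cor:CKX} with $K=B_X^w$ and use the norm-only characterization (iv) of Theorem~\ref{Thm:Equiv} to transfer $\zperp$ between $K(X)$ and $C(B_X^w,X)$, then identify $M_\phi=M_T$ and $\{x^*\in S_{X^*}:x^*(Tx)=\|T\|\}=J(Tx)$; your explicit handling of the degenerate cases $T=0$, $A=0$ and of weak-to-norm continuity fills in details the paper leaves implicit (note that for $T=0$ you need the convention $J(0)=S_{X^*}$, since the paper defines $J$ only at non-zero vectors).
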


\begin{proof}
Apply Corollary~\ref{Cor:CKX} with $K=B_X^w$, $\varphi=T|_{B_X}$, and $\psi=A|_{B_X}$. Since condition (iv) of Theorem~\ref{Thm:Equiv} depends only on the
quantities $\|T+\lambda A\|$, the relation $\zperp$ is unchanged when $K(X)$ is regarded
as a subspace of $C(B_X^w,X)$. Under the isometric embedding $K(X)\hookrightarrow C(B_X^w,X)$, we have 
$\|\varphi\|_\infty=\|T\|$ and
\[
M_0(\varphi)=\{x\in B_X:\|Tx\|=\|T\|\}=M_T,\qquad J(\varphi(x))=J(Tx),
\]
so the conclusion follows immediately from Corollary~\ref{Cor:CKX}.
\end{proof}

\section{Symmetry and concluding remarks}\label{sec5}

\noindent In this section we examine geometric aspects of the proposed orthogonality, including its lack of symmetry in general Banach spaces and its symmetric behaviour in Hilbert spaces. We would like to mark that $\nperp$ is strictly a two-parameter orthogonality, as illustrated in the following example.

\noindent We first record the elementary comparison with Chmieli\'nski's relation.

\begin{proposition}\label{prop:compare}
Let $\delta\in[0,1)$ and $\varepsilon\in[0,(1-\delta)^2)$, and let $x,y\in X$. Then $x\zperp y$ implies 
$x\nperp y$.
\end{proposition}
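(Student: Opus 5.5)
The natural route is through the functional characterization of $\zperp$ quoted in the introduction, combined with the definition of $\nperp$. If $x=0$ or $y=0$, the relation $\nperp$ holds trivially: pick any $f\in S_{X^*}$ norming $x$ via Hahn--Banach, or note that the conditions are vacuous when $x=0$. So assume both vectors are non-zero. By the result of \cite{ChmStWoj, woj}, $x\zperp y$ means there exists $f\in X^*$ with $\|f\|=1$, $f(x)=\|x\|$ and $|f(y)|\le \varepsilon\|y\|$. We want this same $f$ to witness $x\nperp y$.

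For the first condition, $|f(x)|=\|x\|\ge (1-\delta)\|f\|\|x\|$, since $1-\delta\le 1$. For the second, $|f(y)|\le \varepsilon\|y\|\le \frac{\varepsilon}{1-\delta}\|y\|=\frac{\varepsilon}{1-\delta}\|f\|\|y\|$, because $0<1-\delta\le 1$ gives $\varepsilon\le \varepsilon/(1-\delta)$. Hence $f$ satisfies the definition of $x\nperp y$, where the standing hypothesis $\varepsilon<(1-\delta)^2$ is needed only so that $\nperp$ is defined.

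As a cross-check, I would also verify the claim through condition (iv) of Theorem~\ref{Thm:Equiv}. Taking $\delta=0$ in Theorem~\ref{Thm:Equiv} (or using Remark~\ref{rem:unification}), $x\zperp y$ gives $\|x+\lambda y\|\ge \|x\|-\varepsilon|\lambda|\|y\|$. This is at least $(1-\delta)\|x\|-\frac{\varepsilon}{1-\delta}|\lambda|\|y\|$, and the implication (iv)$\Rightarrow$(i) then yields $x\nperp y$. This version needs $\varepsilon<1$ for the $\delta=0$ instance, which holds.

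The main subtle point is simply making sure that the witness functional characterization of $\zperp$ is applicable in the complex setting. If in doubt, I would rely on the norm-inequality route above, since Theorem~\ref{Thm:Equiv} is proved here for complex spaces with $\delta=0$, where $\perp^\varepsilon_0$ coincides with $\zperp$ by the paper's remark.
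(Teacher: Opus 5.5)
This is the paper's proof: the functional $f\in S_{X^*}$ with $f(x)=\|x\|$ and $|f(y)|\le\varepsilon\|y\|$, supplied by the characterization of $\zperp$, already witnesses $x\nperp y$ because $1-\delta\le 1$ and $\varepsilon\le\frac{\varepsilon}{1-\delta}$. One correction to your aside: for $x=0$ the conditions are not vacuous, since you still need a non-zero $f$ with $|f(y)|\le\frac{\varepsilon}{1-\delta}\|f\|\|y\|$, and this fails when $\dim X=1$ and $y\neq 0$ (there $\frac{\varepsilon}{1-\delta}<1-\delta\le 1$) even though $0\zperp y$ holds, so that degenerate case needs $\dim X\ge 2$ --- a point the paper's own proof also passes over.
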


\begin{proof}
Let $f\in S_{X^*}$ satisfy $f(x)=\|x\|$ and $|f(y)|\leq\varepsilon\|y\|$. Then 
$|f(x)|\geq (1-\delta)\|x\|$, and since $\delta\geq 0$ we have 
$\varepsilon\leq\frac{\varepsilon}{1-\delta}$, so 
$|f(y)|\leq\frac{\varepsilon}{1-\delta}\|y\|$.
\end{proof}

\noindent The following example shows that the implication in 
Proposition~\ref{prop:compare} is strict for $\delta>0$, so that $\nperp$ is strictly 
weaker than $\zperp$.

\begin{example}\label{ex:2-para}
Consider the two-dimensional real Banach space $X=\ell_\infty^2$. Consider the unit vectors
\[
x_0=(1,0.6),~y_0=(1,0.4).
\]
Let $\varepsilon=0.2$ and $\delta=0.5$. Then $\frac{\varepsilon}{1-\delta}=0.4.$ Consider the norm one continuous linear functional $f:X\to \mathbb{R}$ by 
\[
f(u,v)=v,\qquad (u,v)\in \ell_\infty^2.
\]
Then 
\[
f(x_0)=0.6\geq (1-\delta)=0.5,~f(y_0)=0.4\leq \frac{\varepsilon}{1-\delta}=0.4.
\]
Therefore, $x_0\nperp y_0$. However, it is easy to see that $x_0$ is a smooth point in $X$ and therefore, has a unique support functional given by
\[
g_0(u,v)=u, \qquad (u,v)\in \ell_\infty^2.
\]
However, since $g_0$ also supports $y_0$, $g_0(y_0)=1$, which shows $x_0\not\perp^{\gamma}_B y_0$ for any $\gamma\in [0,1)$, i.e., $x_0$ is not approximately orthogonal to $y_0$ in the sense of Chmieli\'nski.
\end{example}

\begin{remark}
It is well known that Birkhoff--James orthogonality is symmetric in a normed linear space $X$ with $\dim X \geq 3$ if and only if $X$ is an inner product space. This naturally raises the question of whether symmetry of the two-parameter orthogonality relation $\nperp$ imposes similar geometric restrictions on the underlying space. However, symmetry of $\nperp$ (for fixed $\delta, \varepsilon > 0$) does not a priori imply symmetry of Birkhoff--James orthogonality, since the latter corresponds to the limiting case $(\delta,\varepsilon) = (0,0)$, where the defining conditions become exact rather than approximate. Thus, the behaviour of $\nperp$ may differ substantially from that of $\perp_B$.
\end{remark} 

\noindent The following example shows that $\nperp$ is not symmetric in general.

\begin{example}\label{ex:nonsym}
We work with the same space $X$, $\varepsilon$, $\delta$ as in Example~\ref{ex:2-para}. We know that $x_0 \perp_{\delta}^{\varepsilon} y_0$. We show that $y_0 \not\perp_{\delta}^{\varepsilon} x_0$. The dual of $X$ is $\ell_1^2$ and the dual action is point-wise multiplication. Every norm-one functional $g$ can be identified with a unique vector tuple $(a,b)$ with $|a|+|b|=1$. 

\medskip

\noindent\textit{Case I}: $a \geq 0,\; b \geq 0$, so $a + b = 1$.
The condition $g(y_0) = a + 0.4b \geq 0.5$ gives, on substituting 
$a = 1-b$
\[
1 - 0.6b \geq 0.5 \implies b \leq \tfrac{5}{6}.
\]
Then
\[
g(x_0) = (1-b) + 0.6b = 1 - 0.4b 
\;\geq\; 1 - 0.4 \cdot \tfrac{5}{6} = \tfrac{2}{3} > 0.4.
\]

\medskip

\noindent\textit{Case~II:} $a \geq 0,\; b \leq 0$, so $a - b = 1$, 
i.e., $a = 1+b$ with $b \in [-1,0]$.
The condition $g(y_0) = (1+b) + 0.4b = 1 + 1.4b \geq 0.5$ gives 
$b \geq -\frac{5}{14}$, so $b \in [-\frac{5}{14}, 0]$.
Then $g(x_0) = (1+b) + 0.6b = 1 + 1.6b$ is increasing in $b$, 
with minimum
\[
g(x_0) \;\geq\; 1 + 1.6\cdot\bigl(-\tfrac{5}{14}\bigr) 
= 1 - \tfrac{4}{7} = \tfrac{3}{7} > 0.4.
\]

\medskip
\noindent\textit{Case~III:} $a \leq 0,\; b \geq 0$, so $b - a = 1$, 
i.e., $a = b - 1$ with $b \in [0,1]$.
The condition $g(y_0) = (b-1) + 0.4b = 1.4b - 1 \geq 0.5$ requires 
$b \geq \frac{15}{14} > 1$, which is impossible and therefore, such a case does not arise.

\medskip

\noindent\textit{Case~IV:} $a \leq 0,\; b \leq 0$.
Then $g(y_0) = a + 0.4b \leq 0 < 0.5$, and such a case is also not feasible.

\medskip

\noindent Therefore, we conclude that
\[
|g(x_0)| \geq \frac{3}{7} > 0.4, \quad \forall g\in S_{X^*}:~g(y_0)\geq 0.5.
\]
Consequently,
\[
y_0\not\nperp x_0.
\]
Thus, norming approximate orthogonality is not symmetric in general.
\end{example}

\begin{remark}
In contrast to the general normed space setting, the situation is markedly different in Hilbert spaces. In this case, by Theorem~\ref{Thm:Equiv}, the relation $\nperp$ admits a characterization in terms of the inner product:
\[
|\langle x, y\rangle| \leq L(\delta,\varepsilon)\|x\|\|y\|,
\]
where $\varepsilon < (1-\delta)^2$. This condition is clearly symmetric in $x$ and $y$, and therefore $\nperp$ is symmetric in Hilbert spaces.
\end{remark}

\noindent On the other hand, as seen in Example~\ref{ex:nonsym}, $\nperp$ fails to be symmetric in general normed spaces. In view of this, we close this paper with the following open question.

\bigskip

\begin{question*}
Let $X$ be a Banach space with three or more dimensions. For a fixed pair $\delta,\varepsilon\in(0,1)$ with $\varepsilon<(1-\delta)^2$, suppose the norming approximate orthogonality relation $\nperp$ is symmetric. Does it follow that
$X$ is a Hilbert space?
\end{question*}









\end{document}